
\documentclass[a4paper,10pt]{amsart}
\usepackage{hyperref}
\hypersetup{
    colorlinks,
    citecolor=black,
    filecolor=black,
    linkcolor=black,
    urlcolor=black
}
\usepackage[utf8]{inputenc}
\usepackage{amssymb,amsthm,amsrefs,amsmath}

\DeclareMathOperator{\dist}{dist}
\DeclareMathOperator{\diam}{diam}

\newcommand{\Z}{\mathbb{Z}}

\newcommand{\K}{\mathcal{K}}

\renewcommand{\epsilon}{\varepsilon}

\newtheorem{thm}{Theorem}[section]
\newtheorem{prop}[thm]{Proposition}
\newtheorem{lemma}[thm]{Lemma}
\newtheorem{rmk}[thm]{Remark}
\newtheorem{coro}[thm]{Corollary}

\theoremstyle{definition}
\newtheorem{df}{Definition}

\title{Finite Sets with Fake Observable Cardinality}

\author{Alfonso Artigue}
\email{artigue@unorte.edu.uy}
\address{Departamento de Matem\'atica y Estad\'\i stica del Litoral, Universidad de la Rep\'ublica, Gral. Rivera 1350, Salto, Uruguay.}
\keywords{Topological Dynamics, Expansive Homeomorphisms}
\date{\today}

\begin{document}
\begin{abstract}
\thanks{\footnotesize{
2010 Mathematics Subject Classification: 54H20,37B05
}}

Let $X$ be a compact metric space and let $|A|$ denote the cardinality of a set $A$. 
We prove that if $f\colon X\to X$ is a homeomorphism and $|X|=\infty$ then for all 
$\delta>0$ there is $A\subset X$ such that $|A|=4$ and for all $k\in\Z$ there are $x,y\in f^k(A)$, $x\neq y$, such that 
$\dist(x,y)<\delta$. 
An observer that can only distinguish two points if their distance is grater than $\delta$,
for sure will say that $A$ has at most 3 points even knowing every iterate of $A$ and that $f$ is a homeomorphism. 
We show that for hyper-expansive homeomorphisms the same $\delta$-observer will not fail about the cardinality 
of $A$ if we start with $|A|=3$ instead of $4$.
Generalizations of this problem are considered via what we call $(m,n)$-expansiveness.
\end{abstract}
\maketitle

\section*{Introduction}

Since 1950, when Utz \cite{Utz} initiated the study of expansive homeomorphism, 
several variations of the definition appeared in the literature. 
Let us recall that a homeomorphism $f\colon X\to X$ 
of a compact metric space $(X,\dist)$ is \emph{expansive} 
if there is an \emph{expansive constant} $\delta>0$ such that if 
$x\neq y$ then $\dist(f^k(x),f^k(y))>\delta$ for some $k\in\Z$. 
Some variations of this definition are weaker, 
as for example \emph{continuum-wise expansiveness} \cite{Kato93} and 
$N$-\emph{expansiveness} \cite{Morales} (see also
\cites{Morales2011,Reddy70,Bowen72}).
A branch of research in topological dynamics 
investigates the possibility of extending 
known results for expansive homeomorphisms 
to these versions. See for example \cites{Sakai97, PV2008, MoSi, Jana, APV}. 

Other related definitions are stronger than expansiveness 
as for example \emph{positive expansiveness} \cite{Schw} and \emph{hyper-expansiveness} \cite{Artigue}. 
Both definitions are so strong that their examples are almost trivial.
It is known \cite{Schw} that if a compact metric space admits a positive expansive homeomorphism then the space has only a finite number of points. 
Recall that $f \colon X\to X$ is \emph{positive expansive} if there is $\delta>0$ such that 
if $x\neq y$ then $\dist(f^k(x),f^k(y))>\delta$ for some $k\geq 0$. 
Therefore, we have that if the compact metric space $X$ is not a finite set, 
then for every homeomorphism $f\colon X\to X$ 
and for all $\delta>0$ there are 
$x\neq y$ such that $\dist(f^k(x),f^k(y))<\delta$ for all $k\geq 0$. 
This is a very general result about the dynamics of 
homeomorphisms of compact metric spaces.

Another example of this phenomenon is given in \cite{Artigue}, where it is proved that 
no uncountable compact metric space admits a hyper-expansive homeomorphism (see Definition \ref{defHyperExp}).
Therefore, if $X$ is an uncountable compact metric space, 
as for example a compact manifold, then for every 
homeomorphism $f\colon X\to X$ and for all $\delta>0$ there are two compact subsets $A,B\subset X$, $A\neq B$, 
such that $\dist_H(f^k(A),f^k(B))<\delta$ for all $k\in\Z$. 
The distance $\dist_H$ is called \emph{Hausdorff metric} and its definition is recalled in 
equation (\ref{distHaus}) below.

According to Lewowicz \cite{Lew} we can explain the meaning of expansiveness as follows. 
Let us say that a $\delta$-\emph{observer} 
is someone that cannot distinguish two points if their distance is smaller than $\delta$. 
If $\dist(x,y)<\delta$ a $\delta$-observer will not be able to say that the set $A=\{x,y\}$ has two points. 
But if the homeomorphism is expansive, with expansive constant greater than $\delta$, and if the $\delta$-observer 
knows all of the iterates $f^k(A)$ with $k\in \Z$, then he will find that $A$ contains two different points,
because if $\dist(f^k(x),f^k(y))>\delta$ then he will see two points in $f^k(A)$.
Let us be more precise.

\begin{df}
For $\delta\geq 0$, a set $A\subset X$ is $\delta$-\emph{separated} if for all $x\neq y$, $x,y\in A$, it holds that $\dist(x,y)>\delta$. 
The $\delta$-\emph{cardinality} of a set $A$ is 
$$|A|_\delta=\sup\{|B|: B\subset A\hbox{ and } B \hbox{ is }\delta\hbox{-separated}\},$$
where $|B|$ denotes the cardinality of the set $B$.
\end{df}

Notice that the $\delta$-cardinality is always finite because $X$ is compact. 
The $\delta$-cardinality of a set represents the maximum number of different points that a $\delta$-observer can identify in the set.

In this paper we introduce a series of definitions, some weaker and other stronger than expansiveness,
extending the notion of  $N$-expansiveness of \cite{Morales}. 
Let us recall that given $N\geq 1$, 
a homeomorphism is $N$-\emph{expansive} if there is $\delta>0$ such that 
if $\diam(f^k(A))<\delta$ for all $k\in\Z$ then $|A|\leq N$. 
In terms of our $\delta$-observer we can say that $f$ is $N$-expansive 
if there is $\delta>0$
such that if $|A|=N+1$, a $\delta$-observer will be able to say that $A$ has at least two points
given that he knows all 
of the iterates $f^k(A)$ for $k\in\Z$, i.e., $|f^k(A)|_\delta>1$ for some $k\in\Z$. 
Let us introduce our main definition.

\begin{df}
Given integer numbers $m>n\geq 1$ we say that $f\colon X\to X$ is $(m,n)$-\emph{expansive} if 
there is $\delta>0$ such that if $|A|=m$ then there is $k\in \Z$ such that $|f^k(A)|_\delta> n$.
\end{df}

The first problem under study is the classification of these definitions.
We prove that $(m,n)$-expansiveness implies $N$-expansiveness if $m\leq (N+1)n$. 
In particular, if $m\leq 2n$ then $(m,n)$-expansiveness implies expansiveness.
These results are stated in Corollary \ref{mnImplicaExp}. 
It is known that even on surfaces, $N$-expansiveness does not imply expansiveness for $N\geq 2$, see \cite{APV}. 
Here we show that $(m,n)$-expansiveness does not imply expansiveness if $n\geq 2$. 
For example, Anosov diffeomorphisms are known to be expansive and a consequence of Theorem \ref{teoShadowing} 
is that Anosov diffeomorphisms are not $(m,n)$-expansive for all $n\geq 2$.

It is a fundamental problem in dynamical systems to determine which spaces admit expansive homeomorphisms (or Anosov diffeomorphisms). 
In this paper we prove that no Peano continuum admits a $(m,n)$-expansive homeomorphism 
if $2m\geq 3n$, see Theorem \ref{Peano1}. 
We also show that if $X$ admits a $(n+1,n)$-expansive homeomorphism with $n\geq 3$ then $X$ is a finite set. 
Examples of $(3,2)$-expansive homeomorphisms are given on countable spaces (hyper-expansive homeomorphisms), see 
Theorem \ref{hyperexp}.

The article is organized as follows.
In Section \ref{SecSFS} we prove basic properties of $(m,n)$-expansive homeomorphisms. 
In Section \ref{Sec4P} we prove the first statement of the abstract, i.e., no infinite compact metric space admits 
a $(4,3)$-expansive homeomorphism. 
In {Section} \ref{SecPeano} we show that no Peano continuum admits 
a $(m,n)$-expansive homeomorphism if $2m\geq 3n$. 
In {Section} \ref{SecHyper} we show that hyper-expansive homeomorphisms are $(3,2)$-expansive. 
Such homeomorphisms are defined on compact metric spaces with a countable number of points.
In {Section} \ref{SecGenCase} we prove that a homeomorphism with the shadowing property 
and with two points $x,y$ satisfying 
$$0=\liminf_{k\to\infty}\dist(f^k(x),f^k(y))<\limsup_{k\to\infty}\dist(f^k(x),f^k(y))$$
cannot be   
$(m,2)$-expansive if $m>2$. 


\section{Separating Finite Sets}
\label{SecSFS}

Let $(X,\dist)$ be a compact metric space and 
consider a homeomorphism $f\colon X\to X$. 
Let us recall that
for integer numbers $m>n\geq 1$ 
a homeomorphism $f$ is $(m,n)$-\emph{expansive} if there is 
 $\delta>0$ such that if 
 $|A|=m$ then there is $k\in\Z$ 
 such that $|f^k(A)|_\delta> n$. 
In this case we say that $\delta$ is a $(m,n)$-\emph{expansive constant}. 
The idea of $(m,n)$-expansiveness is
that our $\delta$-observer will find more than $n$ points in every set of $m$ points if he knows all of its iterates.

\begin{rmk}
 From the definitions it follows that a homeomorphisms is $(N+1,1)$-expansive if and only if it is $N$-expansive in the sense of \cite{Morales}. 
 In particular, $(2,1)$-expansiveness is equivalent with expansiveness.
\end{rmk}

\begin{rmk}
Notice that if $X$ is a finite set then every homeomorphism of $X$ is $(m,n)$-expansive. 
\end{rmk}

\begin{prop}
\label{propRelBas}
 If $n'\leq n$ and $m-n\leq m'-n'$ then $(m,n)$-expansive implies $(m',n')$-expansive with the same expansive constant.
\end{prop}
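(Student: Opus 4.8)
The plan is to fix a $(m,n)$-expansive constant $\delta>0$ for $f$ and show that the very same $\delta$ serves as an $(m',n')$-expansive constant. First I would dispose of a degenerate case: if $X$ is a finite set the conclusion is immediate from the remark that every homeomorphism of a finite set is $(m,n)$-expansive for all admissible $m,n$; so from now on I may assume $X$ is infinite, and in particular that $X$ has at least $m$ points.

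Now take an arbitrary $A\subseteq X$ with $|A|=m'$. The idea is to compare $A$ with a nearby set $C$ of cardinality exactly $m$, so that the definition of $(m,n)$-expansiveness can be applied to $C$. Concretely, I would choose $C\subseteq X$ with $|C|=m$ and $|C\setminus A|=\max(0,m-m')$: when $m\le m'$ take $C\subseteq A$, and when $m>m'$ take $A\subseteq C$, which is possible because $|X|\ge m$. The hypothesis $m-n\le m'-n'$ rearranges to $m-m'\le n-n'$, and since also $0\le n-n'$, in either case we get $|C\setminus A|\le n-n'$.

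Next, apply $(m,n)$-expansiveness to $C$: there is $k\in\Z$ and a $\delta$-separated set $D\subseteq f^k(C)$ with $|D|\ge n+1$. The set $D\cap f^k(A)$ is again $\delta$-separated, being a subset of $D$, and since $D\setminus f^k(A)\subseteq f^k(C)\setminus f^k(A)=f^k(C\setminus A)$ has at most $|C\setminus A|\le n-n'$ elements, we obtain $|D\cap f^k(A)|\ge (n+1)-(n-n')=n'+1$. Hence $|f^k(A)|_\delta\ge|D\cap f^k(A)|>n'$, which is exactly what $(m',n')$-expansiveness requires.

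I do not expect a real obstacle here. The two points to be careful about are the elementary rearrangement of $m-n\le m'-n'$ into the bound on $|C\setminus A|$, and the edge case in which $A$ cannot be enlarged to a set of size $m$ — but that situation forces $X$ to be finite and is absorbed into the remark invoked at the start. I anticipate the final write-up to be short.
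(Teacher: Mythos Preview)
Your argument is correct and follows essentially the same route as the paper: dispose of the finite case, then compare $A$ with a set of cardinality $m$ (a subset of $A$ when $m\le m'$, a superset when $m>m'$), apply $(m,n)$-expansiveness to that set, and count how many points of the resulting $\delta$-separated set must already lie in $f^k(A)$. Your presentation unifies the two cases a bit more cleanly via $|C\setminus A|=\max(0,m-m')$, but the substance is identical.
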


\begin{proof}
 The case $|X|<\infty$ is trivial, so, let us assume that $|X|=\infty$. 
Consider $\delta>0$ as a $(m,n)$-expansive constant. 
Given a set $A$ with $|A|=m'$ we will show that there is $k\in\Z$ such that 
$|f^k(A)|_\delta>n'$, i.e., the same expansive constant works. 
We divide the proof in two cases. 

First assume that $m'\geq m$. 
Let $B\subset A$ with $|B|=m$. 
Since $f$ is $(m,n)$-expansive, there is $k\in\Z$ such that 
$|f^k(B)|_\delta>n$. 
Therefore $|f^k(A)|_\delta> n\geq n'$, proving that $f$ is $(m',n')$-expansive. 

Now suppose that $m'<m$. 
Given that $|A|=m'$ and $|X|=\infty$ there is $C\subset X$ such that 
$A\cap C=\emptyset$ and $|A\cup C|=m$. 
By $(m,n)$-expansiveness, there is $k\in\Z$ such that 
$|f^k(A\cup C)|_\delta>n$. 
Then, there is a $\delta$-separated set $D\subset f^k(A\cup C)$ with $|D|>n$.
Notice that 
$$|f^k(A)\cap D|= |D\setminus f^k(C)|\geq |D|-|f^k(C)|>n-(m-m')$$
and since $n-(m-m')\geq n'$ by hypothesis, we have that 
$f^k(A)\cap D$ is a $\delta$-separated subset of $f^K(A)$ with more than $n'$ points, 
that is $|f^k(A)|_\delta>n'$. 
This proves the $(m'n')$-expansiveness of $f$ in this case too.
\end{proof}

As a consequence of Proposition \ref{propRelBas} we have that 
\begin{enumerate}
 \item $(m,n)$-expansive implies $(m+1,n)$-expansive and 
 \item $(m,n)$-expansive implies $(m-1,n-1)$-expansive.
\end{enumerate}
In Table \ref{tabla} below we can easily see all  these implications. 
The following proposition allows us to draw more arrows in this table, for example: $(4,2)\Rightarrow (2,1)$.

\begin{table}[h]
\label{tabla}
\caption{Basic hierarchy of $(m,n)$-expansiveness. 
Each pair $(m,n)$ in the table stands for ``$(m,n)$-expansive``.
In the first position, (2,1), we have expansiveness. 
The first line, of the form $(N+1,1)$, we have $N$-expansive homeomorphisms. 
}
\[
\begin{array}{ccccccc}
(2,1)    & \Rightarrow & (3,1)    & \Rightarrow & (4,1)    & \Rightarrow & \dots  \\
\Uparrow &             & \Uparrow &             & \Uparrow &             & \\
(3,2)    & \Rightarrow & (4,2)    & \Rightarrow & (5,2)    & \Rightarrow & \dots  \\
\Uparrow &             & \Uparrow &             & \Uparrow &             & \\
(4,3)    & \Rightarrow & (5,3)    & \Rightarrow & (6,3)    & \Rightarrow & \dots  \\
\Uparrow &             & \Uparrow &             & \Uparrow &             & \\
\dots    &             & \dots    &             & \dots    &             &
\end{array}
\]
\label{tableExp}
\end{table}

\begin{prop}
\label{prop(an,n)}
 If $a,n\geq 2$ and $f\colon X\to X$ is an $(an,n)$-expansive homeomorphism
 then $f$ is $(a,1)$-expansive.
\end{prop}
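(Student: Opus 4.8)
The plan is to argue by contradiction: suppose $f$ is $(an,n)$-expansive with constant $\delta>0$ but is \emph{not} $(a,1)$-expansive, and construct a set of $an$ points no iterate of which has $\delta$-cardinality larger than $n$, contradicting $(an,n)$-expansiveness.

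First I would unpack the hypothesis that $f$ is not $(a,1)$-expansive: for every $\epsilon>0$ there is a set $A$ with $|A|=a$ and $|f^k(A)|_\epsilon\le 1$, i.e.\ $\diam(f^k(A))\le\epsilon$, for all $k\in\Z$. Applying this with $\epsilon=\delta/(2\ell)$, $\ell=1,2,\dots$, yields sets $A_\ell$ with $|A_\ell|=a$ and $\diam(f^k(A_\ell))\le\delta/2$ for all $k$; since $\diam(A_\ell)\le\delta/(2\ell)\to 0$ while each $A_\ell$ has positive diameter ($a\ge2$), infinitely many of the $A_\ell$ are distinct. Thus we obtain an infinite family $\mathcal{A}$ of pairwise distinct $a$-element subsets of $X$, each with $\diam(f^k(A))\le\delta/2$ for every $k\in\Z$.

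Next I would split into two cases according to whether some point of $X$ belongs to infinitely many members of $\mathcal{A}$. If no point does, then any fixed member meets only finitely many others (its $a$ points lie in finitely many members), so a greedy choice produces pairwise disjoint $A_1,\dots,A_n\in\mathcal{A}$; then $B:=A_1\cup\dots\cup A_n$ satisfies $|B|=an$ and, for every $k$, $|f^k(B)|_\delta\le\sum_{i=1}^n|f^k(A_i)|_\delta\le n$, since each $f^k(A_i)$ has diameter at most $\delta/2<\delta$ and hence $\delta$-cardinality $1$. If instead some $p_0\in X$ lies in infinitely many members, gather them into an infinite subfamily $\mathcal{A}''$ and put $W:=\bigcup\mathcal{A}''$; as a finite set has only finitely many $a$-subsets, $W$ is infinite, and every $q\in W$ lies in a member of $\mathcal{A}''$ together with $p_0$, so $\dist(f^k(p_0),f^k(q))\le\delta/2$ for all $k$. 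By the triangle inequality any two points of $W$ remain within $\delta$ along the whole orbit, so every $F\subseteq W$ with $|F|=an$ has $|f^k(F)|_\delta\le 1\le n$ for all $k$. In either case this contradicts the $(an,n)$-expansiveness of $f$.

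The delicate point is the case distinction: from the failure of $(a,1)$-expansiveness one only obtains bad $a$-element sets, and there is no reason these can be chosen pairwise disjoint. The resolution is that the apparently unfavorable alternative — all bad sets passing through a common point $p_0$ — is in fact the favorable one, because the factor $1/2$ in the chosen thresholds lets the triangle inequality trap an entire infinite set of points within $\delta$ for all time, an even stronger obstruction to $(an,n)$-expansiveness. The remaining care is purely bookkeeping: using $\delta/2$ rather than $\delta$ for the bad sets, and extracting infinitely many distinct ones from the fact that their diameters tend to $0$. (If $|X|<\infty$ the statement is trivial, since every homeomorphism of a finite space is $(a,1)$-expansive.)
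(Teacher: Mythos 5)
Your proof is correct, and it takes a genuinely different route from the paper's. The paper first establishes two auxiliary results — the inequality of Lemma \ref{lemma1(an,n)} bounding $|A\cup B|_{\delta+\epsilon}$ when $|A|=|A|_\delta$ and $|B|_\delta=1$, and the dichotomy of Lemma \ref{lemma2propann} that $(m+l,n+1)$-expansiveness forces $(m,n)$- or $(l,1)$-expansiveness — and then runs a descending induction from $(an,n)$ down to $(a,1)$, the contradiction being that the assumed failure of $(a,1)$-expansiveness ends up implying $(a,1)$-expansiveness. You instead exploit that the negation of $(a,1)$-expansiveness is quantified over \emph{all} $\epsilon>0$: taking $\epsilon=\delta/(2\ell)$ you manufacture infinitely many distinct $a$-point sets all of whose iterates have diameter at most $\delta/2$, and you split on whether some point lies in infinitely many of them. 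If not, a greedy choice gives $n$ pairwise disjoint such sets, and subadditivity of $\delta$-cardinality over unions (a one-line fact you use implicitly, worth stating) bounds the $\delta$-cardinality of every iterate of the $an$-point union by $n$; if so, the triangle inequality through the common point traps an entire infinite set within $\delta$ along the whole orbit, which is an even stronger violation. Both branches contradict $(an,n)$-expansiveness with constant $\delta$. Your argument is more elementary and self-contained, avoiding the intersection bookkeeping of Lemmas \ref{lemma1(an,n)} and \ref{lemma2propann} altogether, because all your bad sets come from the single hypothesis of non-$(a,1)$-expansiveness rather than from combining a bad set for $(m,n)$ with a bad set for $(l,1)$; what the paper's route buys is the dichotomy lemma itself, a reusable combinatorial statement that also makes the hierarchy of Proposition \ref{propRelBas} do the final step, whereas your construction is tailored to the specific pair $(an,n)$ versus $(a,1)$.
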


In order to prove it, let us introduce two previous results. 

\begin{lemma}
\label{lemma1(an,n)}
 If $A,B\subset X$ are finite sets and $\delta>0$ satisfies $|A|=|A|_\delta$ and $|B|_\delta=1$ then 
 for all $\epsilon>0$ it holds that 
 \[
  |A\cup B|_{\delta+\epsilon}\leq |A|_\epsilon+|B|_\delta-|A\cap B|.
 \]
\end{lemma}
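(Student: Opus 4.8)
The plan is to exploit the hypotheses to count points carefully. Since $|B|_\delta = 1$, the set $B$ has no two points at distance greater than $\delta$, so $\diam(B) \le \delta$; in particular $B$ is contained in a single closed ball of radius $\delta$. The hypothesis $|A| = |A|_\delta$ says that $A$ itself is $\delta$-separated, so $|A| = |A|_\epsilon = |A|_\delta$ for every $\epsilon \le \delta$ — wait, I should be careful: $|A|_\epsilon$ could be smaller than $|A|$ if $\epsilon < \delta$. Actually the right reading is that $A$ being $\delta$-separated forces every pair of distinct points of $A$ to be more than $\delta$ apart, hence more than $\epsilon$ apart is false in general; but $|A|_\epsilon$ is defined via $\epsilon$-separated subsets, and the whole of $A$ is $\delta$-separated hence $\epsilon$-separated when $\epsilon \le \delta$. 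So in fact $|A|_\epsilon = |A|$ whenever $\epsilon \le \delta$, and the inequality to be proven becomes, for small $\epsilon$, $|A \cup B|_{\delta+\epsilon} \le |A| + 1 - |A \cap B|$. This is the case I would focus on first, then handle larger $\epsilon$ by monotonicity.

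First I would take a $(\delta+\epsilon)$-separated set $D \subset A \cup B$ realizing the supremum, so $|D| = |A \cup B|_{\delta+\epsilon}$. Split $D = (D \cap A) \cup (D \cap B \setminus A)$. The set $D \cap A$ is an $\epsilon$-separated subset of $A$ (indeed $(\delta+\epsilon)$-separated), so $|D \cap A| \le |A|_\epsilon$. For the second piece, $D \cap (B \setminus A) \subset B$ is $(\delta+\epsilon)$-separated; since $\diam(B) \le \delta < \delta + \epsilon$, any $(\delta+\epsilon)$-separated subset of $B$ has at most one point, so $|D \cap (B \setminus A)| \le |B|_\delta = 1$. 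That gives $|D| \le |A|_\epsilon + 1$, which is almost the claim but missing the $-|A \cap B|$ term. To recover it, I would observe that points of $A \cap B$ that lie in $D \cap A$ are "charged" to the $A$-side but are within $\delta$ of the single cluster $B$; more precisely, if $D$ contains a point $p \in D \cap (B \setminus A)$ then every point of $A \cap B$ is within $\delta$ of $p$, hence cannot be in $D$ (as $D$ is $(\delta+\epsilon)$-separated), so $D \cap A \subset A \setminus (A \cap B)$ and thus $|D \cap A| \le |A| - |A \cap B| = |A|_\epsilon - |A \cap B|$ (using $|A|_\epsilon = |A|$). If instead $D \cap (B \setminus A) = \emptyset$, then $D \subset A$, giving $|D| \le |A| = |A|_\epsilon \le |A|_\epsilon + 1 - |A \cap B|$ only when $|A \cap B| \le 1$; the case $|A \cap B| \ge 2$ with $D \subset A$ needs the observation that at most one point of $D$ can lie in the $\delta$-cluster $A \cap B$, so $|D \cap (A\cap B)| \le 1$ and $|D| \le (|A| - |A\cap B|) + 1$. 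Combining the two cases yields $|D| \le |A|_\epsilon + |B|_\delta - |A \cap B|$ in all situations.

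Finally, for general $\epsilon > 0$ not necessarily $\le \delta$, the only place I used $\epsilon \le \delta$ was to write $|A|_\epsilon = |A|$; re-examining the argument, the bounds $|D \cap A| \le |A|_\epsilon$ and $|D \cap (A \cap B)| \le 1$ and the dichotomy on $D \cap (B\setminus A)$ all hold verbatim for any $\epsilon > 0$, so the stated inequality follows without restriction on $\epsilon$. The main obstacle I anticipate is the bookkeeping in the case $D \cap (B \setminus A) = \emptyset$ with $|A \cap B| \ge 2$: one must use that $A \cap B \subset B$ is a $\delta$-cluster and $D$ is $(\delta + \epsilon)$-separated to conclude $D$ meets $A \cap B$ in at most one point, and then account for the single leftover slot correctly against the $|B|_\delta = 1$ term. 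Everything else is a routine partition-and-count.
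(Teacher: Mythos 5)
Your partition-and-count works when $\epsilon\le\delta$, but the extension to arbitrary $\epsilon>0$ claimed in your last paragraph does not go through, and $\epsilon>\delta$ is exactly the regime in which the lemma gets used (in Lemma \ref{lemma2propann} the constant $\delta$ is taken small, only so that $A$ is $\delta$-separated, so typically $\delta<\epsilon$ there). The gap is in your first case, when $D$ contains a point $p\in B\setminus A$ and $A\cap B=\{y\}$: from $y\notin D$ you only get $D\cap A\subset A\setminus\{y\}$, hence $|D\cap A|\le |A\setminus\{y\}|_\epsilon$, and you convert this into $|A|_\epsilon-|A\cap B|$ by invoking $|A|_\epsilon=|A|$. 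That identity holds only for $\epsilon\le\delta$; for $\epsilon>\delta$ one can have $|A|_\epsilon<|A|$ and also $|A\setminus\{y\}|_\epsilon=|A|_\epsilon$ (take $A=\{y,z\}$ with $\delta<\dist(y,z)\le\epsilon$), so the chain $|D\cap A|\le |A|-|A\cap B|=|A|_\epsilon-|A\cap B|$ breaks down, and the bounds you list as holding ``verbatim'' ($|D\cap A|\le|A|_\epsilon$, at most one point of $D$ in $B$) are too weak to produce the $-|A\cap B|$ term.

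What is missing is a metric estimate, not more counting: if $q\in D\cap A$ then $\dist(p,q)>\delta+\epsilon$ while $\dist(p,y)\le\delta$ (both $p,y\in B$ and $|B|_\delta=1$), so the triangle inequality gives $\dist(y,q)>\epsilon$. Hence $(D\cap A)\cup\{y\}$ is an $\epsilon$-separated subset of $A$, which yields the needed $|D\cap A|\le|A|_\epsilon-1$ for every $\epsilon>0$. This swap of the $B$-point for $y$ is precisely the paper's proof (there one replaces the point $x\in C\setminus A$ by $y$ and checks $\epsilon$-separation of the resulting subset of $A$ by the same triangle inequality). Incidentally, your concern about the case $|A\cap B|\ge 2$ is vacuous: since $A$ is $\delta$-separated and all pairs in $B$ are at distance $\le\delta$, necessarily $|A\cap B|\le 1$, which also disposes of the bookkeeping in your second case.
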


\begin{proof}
 If $A\cap B=\emptyset$ then the proof is easy because 
 \[
  |A\cup B|_{\delta+\epsilon}\leq |A|_{\delta+\epsilon}+|B|_{\delta+\epsilon}\leq |A|_\epsilon + |B|_\delta.
 \]
Assume now that $A\cap B\neq\emptyset$. 
Since $|A|=|A|_\delta$ we have that $A$ is $\delta$-separated. 
Therefore $|A\cap B|=1$ because $|B|_\delta=1$. 
Assume that $A\cap B=\{y\}$. 
Let us prove that $|A\cup B|_{\delta+\epsilon}\leq|A|_\epsilon$ and notice that 
it is sufficient to conclude the proof of the lemma. 

Let $C\subset A\cup B$ be a $(\delta+\epsilon)$-separated set such that 
$|C|=|A\cup B|_{\delta+\epsilon}$. 
If $C\subset A$ then $$|A\cup B|_{\delta+\epsilon}=|A|_{\delta+\epsilon}\leq |A|_\epsilon.$$
Therefore, let us assume that there is $x\in C\setminus A$. 
Define the set 
\[
 D=(C\cup\{y\})\setminus \{x\}.
\]
Notice that $|C|=|D|$ and $D\subset A$. 

We will show that $D$ is $\epsilon$-separated. 
Take $p,q\in D$ and arguing by contradiction assume that 
$p\neq q$ and $\dist(p,q)\leq\epsilon$. 
If $p,q\in C$ there is nothing to prove because $C$ is $(\delta+\epsilon)$-separated. 
Assume now that $p=y$. 
We have that $\dist(x,p)\leq\delta$ because $x,p\in B$ and $|B|_\delta=1$. 
Thus 
\[
 \dist(x,q)\leq \dist(x,p)+\dist(p,q) \leq \epsilon +\delta.
\]
But this is a contradiction because $x,q\in C$ 
and $C$ is $(\epsilon + \delta)$-separated.
\end{proof}

\begin{lemma}
\label{lemma2propann}
If $f$ is $(m+l,n+1)$-expansive then $f$ is $(m,n)$-expansive or $(l,1)$-expansive.
\end{lemma}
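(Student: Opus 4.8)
The plan is to argue by contraposition: assume $f$ is neither $(m,n)$-expansive nor $(l,1)$-expansive, and show $f$ is not $(m+l,n+1)$-expansive. So fix an arbitrary $\delta>0$; I must produce a set $A$ with $|A|=m+l$ such that $|f^k(A)|_\delta\leq n+1$ for all $k\in\Z$. The idea is to build $A$ as a disjoint union $A = P\cup Q$ with $|P|=m$ and $|Q|=l$, where $P$ witnesses the failure of $(m,n)$-expansiveness and $Q$ witnesses the failure of $(l,1)$-expansiveness, choosing the failure parameters carefully so that their $\delta$-cardinalities add up to at most $(n)+(1) = n+1$ at every iterate.

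The key step is a correct bookkeeping of the separation radii. Since $f$ is not $(l,1)$-expansive, for the radius $\delta$ there is a set $Q$ with $|Q|=l$ and $|f^k(Q)|_\delta = 1$ for all $k$ (i.e.\ $\diam f^k(Q)\leq\delta$ for all $k$; being not $(l,1)$-expansive means $|f^k(Q)|_\delta\leq 1$, and since $Q$ is nonempty this is exactly $=1$). Now I want to invoke Lemma \ref{lemma1(an,n)}, which compares $|A\cup B|_{\delta+\epsilon}$ with $|A|_\epsilon + |B|_\delta - |A\cap B|$ under the hypotheses $|A|=|A|_\delta$ and $|B|_\delta=1$. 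To apply it with $B = f^k(Q)$, I need the $P$-part at each iterate to be $\delta'$-separated for the appropriate $\delta'$ playing the role of "$\epsilon$"; that is, I should choose a small $\epsilon>0$ and apply the failure of $(m,n)$-expansiveness at radius $\epsilon$ (not $\delta$), obtaining $P$ with $|P|=m$ and $|f^k(P)|_\epsilon\leq n$ for all $k$. Then set $A = P\cup Q$ (arranging disjointness, possible since both sets can be perturbed or chosen generically — or more cleanly, since the failure is witnessed by infinitely many candidate sets, or one simply notes $|A|$ could a priori be less than $m+l$ if there are forced coincidences, which one handles by the usual padding-with-extra-points argument as in Proposition \ref{propRelBas}). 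Applying Lemma \ref{lemma1(an,n)} with the $A$ of that lemma taken to be a maximal $\epsilon$-separated subset of $f^k(P)$ — so that its hypothesis $|A|=|A|_\delta$ reads as that subset being $\epsilon$-separated, hence we rename the lemma's "$\delta$" to our "$\epsilon$" — I would get, for every $k$,
\[
|f^k(A)|_{\delta+\epsilon} \;=\; |f^k(P)\cup f^k(Q)|_{\delta+\epsilon} \;\leq\; |f^k(P)|_\epsilon + |f^k(Q)|_\epsilon \;\leq\; n + 1,
\]
where the first inequality is Lemma \ref{lemma1(an,n)} (applied to a maximal $\epsilon$-separated subset of $f^k(P)$ together with $B=f^k(Q)$, whose $\epsilon$-cardinality is bounded by its $\delta$-cardinality $=1$) and the second uses the two chosen failures. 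Since $\delta+\epsilon$ can be taken to be our original arbitrary $\delta$ (just start the argument with $\delta/2$ in place of $\delta$ for the $Q$-radius and $\epsilon=\delta/2$), this shows $f$ is not $(m+l,n+1)$-expansive.

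The main obstacle I anticipate is the precise matching of radii so that Lemma \ref{lemma1(an,n)} applies verbatim: its statement assumes $|B|_\delta=1$ and $|A|=|A|_\delta$ with the \emph{same} $\delta$, and concludes a bound on $|A\cup B|_{\delta+\epsilon}$ in terms of $|A|_\epsilon$. Translating "$f$ is not $(l,1)$-expansive at radius $r$" into "$|B|_r=1$" is immediate, but one must make sure the radius $r$ used for $Q$ equals the radius $\delta$ in the lemma, while the radius used for the $(m,n)$-failure equals the lemma's $\epsilon$ — and the final conclusion is then at radius $r+\epsilon$, which is what forces the small-$\epsilon$ rescaling trick above. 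A secondary, routine nuisance is guaranteeing $|P\cup Q|=m+l$ exactly; this is dealt with exactly as in the second case of the proof of Proposition \ref{propRelBas}, enlarging or choosing the witnessing sets inside the infinite space $X$ so that they are disjoint and of the right sizes, and noting the trivial case $|X|<\infty$ separately.
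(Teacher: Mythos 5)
Your overall strategy is the paper's: argue by contraposition, take a witness of the failure of $(m,n)$-expansiveness and a witness of the failure of $(l,1)$-expansiveness, and control the $\delta$-cardinality of the union at every iterate via Lemma \ref{lemma1(an,n)}. However, two of your steps have genuine gaps, and they are precisely the points where the hypotheses of Lemma \ref{lemma1(an,n)} carry the content. First, the radius bookkeeping: you fix the radius $\delta$ for $Q$ first, then take a small $\epsilon$ for $P$, and you bound $|f^k(Q)|_\epsilon$ by $|f^k(Q)|_\delta=1$. But $r\mapsto |B|_r$ is non\emph{increasing} in $r$, so for $\epsilon<\delta$ the inequality goes the other way and $|f^k(Q)|_\epsilon$ can be as large as $l$. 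With your order of choices the hypotheses of Lemma \ref{lemma1(an,n)} (a single radius $\delta$ with $|A|=|A|_\delta$ and $|B|_\delta=1$) cannot be verified: a maximal $\epsilon$-separated subset of $f^k(P)$ is only $\epsilon$-separated, not $\delta$-separated. The correct order is the reverse one: first choose $\epsilon<\alpha$ and the $(m,n)$-failure witness $A$, then choose $\delta$ small enough, \emph{depending on the finite set $A$}, so that $A$ is $\delta$-separated and $\delta+\epsilon<\alpha$, and only then take the $(l,1)$-failure witness $B$ at radius $\delta$. (A smaller slip: applying the lemma to a maximal $\epsilon$-separated subset $S\subset f^k(P)$ together with $f^k(Q)$ bounds $|S\cup f^k(Q)|_{\delta+\epsilon}$, not $|f^k(P)\cup f^k(Q)|_{\delta+\epsilon}$.)

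Second, the overlap $P\cap Q$. The witnesses are specific sets and cannot be ``perturbed'' or ``chosen generically'' to be disjoint; there is no such freedom, and nothing guarantees infinitely many witnesses avoiding a given finite set. Your fallback, padding $P\cup Q$ with extra points as in Proposition \ref{propRelBas}, does not close the gap by itself: if $|P\cap Q|=r\geq 1$ and you only know $|f^k(P\cup Q)|_{\delta+\epsilon}\leq n+1$, then adding $r$ arbitrary points gives a set of cardinality $m+l$ with $\delta$-cardinality possibly $n+1+r$, which proves nothing. What actually handles the overlap is the term $-|A\cap B|$ in Lemma \ref{lemma1(an,n)} combined with the $\delta$-separation of the $(m,n)$-witness (which forces $|A\cap B|\leq 1$): one concludes that $f$ is not $(m+l-r,\,n+1-r)$-expansive and then invokes Proposition \ref{propRelBas} to deduce that $f$ is not $(m+l,n+1)$-expansive. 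This is exactly the step your plan replaces by the unjustified disjointness claim, so as written the argument does not go through, even though the intended decomposition is the right one.
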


\begin{proof}
Let us argue by contradiction and take an $(m+l,n+1)$-expansive constant $\alpha>0$. 
Since $f$ is not $(m,n)$-expansive for $\epsilon\in(0,\alpha)$ there 
is a set $A\subset X$ such that 
$|A|=m$ and 
$|f^k(A)|_\epsilon \leq n$ for all $k\in\Z$. 
Take $\delta>0$ such that $|A|=|A|_\delta$ and $\delta+\epsilon<\alpha$. 

Since $f$ is not $(l,1)$-expansive there is $B$ such that 
$|B|=l$ and $|f^k(B)|_\delta=1$ for all $k\in\Z$. 
By Lemma \ref{lemma1(an,n)} we have that 
\[
 |f^k(A\cup B)|_{\delta+\epsilon}\leq |f^k(A)|_\epsilon + |f^k(B)|_\delta-|A\cap B|
 \leq n+1-|A\cap B|,
\]
for all $k\in\Z$.
Also, we know that $|A\cup B|=m+l-|A\cap B|$. 
If we denote $r=|A\cap B|$ then $f$ is not 
$(m+l-r,n+1-r)$-expansive. 
And by Proposition \ref{propRelBas} we conclude that 
$f$ is not $(m+l,n+1)$-expansive.
This contradiction proves the lemma.
\end{proof}

\begin{proof}[Proof of Proposition \ref{prop(an,n)}]
Assume by contradiction that $f$ is not $(a,1)$-expansive. 
Since $f$ is $(an,n)$-expansive, by Lemma \ref{lemma2propann} 
we have that $f$ has to be $(a(n-1),n-1)$-expansive. 
Arguing inductively we can prove that 
$f$ is $(a(n-j),n-j)$-expansive, for $j=1,2,\dots, n-1$. 
In particular, $f$ is $(a,1)$-expansive, which is a contradiction 
that proves the proposition.
\end{proof}

\begin{coro}
\label{mnImplicaExp}
If $m\leq an$ and $f$ is $(m,n)$-expansive then $f$ is $(a,1)$-expansive (i.e. $(a-1)$-expansive in the sense of \cite{Morales}).
In particular, if $m\leq 2n$ and $f$ is $(m,n)$-expansive then $f$ is expansive.
\end{coro}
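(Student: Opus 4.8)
The plan is to deduce Corollary \ref{mnImplicaExp} directly from Proposition \ref{prop(an,n)} together with the monotonicity results that precede it, handling the trivial finite case separately. Since Proposition \ref{prop(an,n)} requires $a\geq 2$ and $n\geq 2$, and its input hypothesis is $(an,n)$-expansiveness rather than $(m,n)$-expansiveness with $m\leq an$, the corollary is really a matter of bridging these two discrepancies using Proposition \ref{propRelBas}.

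First I would observe that $m\leq an$ together with $m>n$ forces $a\geq 2$, so that case of the hypotheses in Proposition \ref{prop(an,n)} is automatic; and if $n=1$ then the statement ``$f$ is $(a,1)$-expansive'' with $a\geq m$ follows immediately from the first consequence of Proposition \ref{propRelBas} (namely $(m,n)$-expansive implies $(m+1,n)$-expansive, iterated $an-m$ times, using $a\geq m=an$ when $n=1$). So I may assume $n\geq 2$. Next, since $m\leq an$, I would use the implication $(m,n)\Rightarrow(m+1,n)$ from Proposition \ref{propRelBas} repeatedly to pass from $(m,n)$-expansiveness to $(an,n)$-expansiveness; formally, $n'=n$, $m'=an$, and the conditions $n'\leq n$ and $m-n\leq m'-n'$ of Proposition \ref{propRelBas} read $n\leq n$ and $m-n\leq an-n$, both of which hold. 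Then Proposition \ref{prop(an,n)} applies and yields that $f$ is $(a,1)$-expansive.

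For the second assertion, take $a=2$: the condition $m\leq 2n$ with $m>n\geq 1$ again gives $m>n$, and if $n=1$ then $m\leq 2$ forces $m=2$ so $f$ is already expansive by definition (equivalently $(2,1)$-expansive); if $n\geq 2$ the first part gives that $f$ is $(2,1)$-expansive, which the first remark of this section identifies with expansiveness. I would also recall, for clarity of the parenthetical remark, that $(a,1)$-expansiveness is by definition $(a-1)$-expansiveness in the sense of \cite{Morales}.

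I do not anticipate a genuine obstacle here: the entire content has been front-loaded into Propositions \ref{propRelBas} and \ref{prop(an,n)}. The only point requiring a little care is the boundary behavior of the hypotheses of Proposition \ref{prop(an,n)} (that it demands $n\geq 2$), so the mild nuisance is simply to dispatch the $n=1$ case by hand and to verify that $m\leq an$ with $m>n$ indeed guarantees $a\geq 2$ before invoking the proposition. Everything else is a one-line application of the monotonicity already established.
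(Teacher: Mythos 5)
Your proof is correct and follows essentially the same route as the paper: pass from $(m,n)$- to $(an,n)$-expansiveness via Proposition \ref{propRelBas} and then apply Proposition \ref{prop(an,n)}. The only difference is that you explicitly dispatch the boundary cases ($n=1$, and checking $a\geq 2$) that the paper's two-line proof leaves implicit, which is a reasonable extra precaution but not a new idea.
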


\begin{proof}
 By Proposition \ref{propRelBas} we have that $f$ is $(an,n)$-expansive. 
 Therefore, by Proposition \ref{prop(an,n)} we have that $f$ is $(a,1)$-expansive. 
\end{proof}

\section{Separating 4 points}
\label{Sec4P}

In this section we prove that $(n+1,n)$-expansiveness with $n\geq 3$ implies that $X$ is finite.

\begin{thm}
\label{teoF3}
 If $X$ is a compact metric space admitting a $(4,3)$-expansive homeomorphism then $X$ is a finite set.
\end{thm}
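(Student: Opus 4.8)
The plan is to argue by contradiction: suppose $|X|=\infty$ and $f$ is a $(4,3)$-expansive homeomorphism, with $(4,3)$-expansive constant $\delta>0$. I will construct a set $A\subseteq X$ with $|A|=4$ such that $|f^k(A)|_\delta\le 3$ for every $k\in\Z$, which directly contradicts the definition of a $(4,3)$-expansive constant. The only external input I need is the theorem of Schwartzman recalled in the Introduction: since $X$ is infinite, a homeomorphism of $X$ cannot be positively expansive, so, applying this to $f$ and, separately, to $f^{-1}$, for every $\epsilon>0$ there are $x\neq y$ with $\dist(f^k(x),f^k(y))<\epsilon$ for all $k\ge 0$, and there are $x'\neq y'$ with $\dist(f^k(x'),f^k(y'))<\epsilon$ for all $k\le 0$. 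Notably the argument does not pass through Corollary \ref{mnImplicaExp} or through expansiveness itself.

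Concretely, I would first fix $\epsilon\in(0,\delta)$ and use the above twice to choose $p_1\neq p_2$ with $\dist(f^k(p_1),f^k(p_2))<\epsilon$ for all $k\ge 0$, and $q_1\neq q_2$ with $\dist(f^k(q_1),f^k(q_2))<\epsilon$ for all $k\le 0$. Let $Y=\{p_1,p_2,q_1,q_2\}$; since $p_1\neq p_2$ we have $2\le|Y|\le 4$, so using $|X|=\infty$ I may choose $A$ with $Y\subseteq A\subseteq X$ and $|A|=4$. For every $k\in\Z$ the set $f^k(A)$ then contains two distinct points lying within distance $\epsilon<\delta$ of each other: the pair $f^k(p_1),f^k(p_2)$ when $k\ge 0$, and the pair $f^k(q_1),f^k(q_2)$ when $k\le 0$. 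Hence $f^k(A)$ is not $\delta$-separated, and since $|f^k(A)|=4$, any $\delta$-separated subset of $f^k(A)$ must omit one of those two points, so $|f^k(A)|_\delta\le 3$. As this holds for every $k$, $\delta$ is not a $(4,3)$-expansive constant, a contradiction; therefore $X$ is finite. (The general statement that $(N+1,N)$-expansiveness with $N\ge 3$ forces $X$ finite then follows immediately, since $(N+1,N)$-expansive implies $(4,3)$-expansive by Proposition \ref{propRelBas}.)

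The one point requiring care is producing, simultaneously, a set of exactly four points and control over all iterates $f^k(A)$. The tempting move is to take $A$ to be the disjoint union of a single ``forward $\epsilon$-close'' pair and a single ``backward $\epsilon$-close'' pair; but these two pairs may share a point, so their union may have only two or three elements. The resolution, which is really the only idea in the proof, is that it costs nothing to pad $Y$ out to four points using arbitrary points of the infinite space, because the forward pair already certifies non-$\delta$-separation of $f^k(A)$ for all $k\ge 0$ and the backward pair does so for all $k\le 0$. Beyond invoking Schwartzman's theorem correctly for both $f$ and $f^{-1}$, I do not expect any genuine obstacle.
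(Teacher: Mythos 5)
Your proof is correct and follows essentially the same route as the paper: both arguments rest on the Schwartzman/Coven--Keane theorem applied to $f$ and to $f^{-1}$ to produce a forward $\delta$-close pair and a backward $\delta$-close pair whose union can never be $\delta$-separated under any iterate. The only difference is in the bookkeeping for possibly coinciding points: you pad the set $Y$ up to cardinality exactly $4$ using $|X|=\infty$, while the paper keeps the (possibly smaller) set $A$, observes that $f$ fails to be $(n,n-1)$-expansive for $n=|A|\in\{2,3,4\}$, and invokes Proposition \ref{propRelBas} — whose proof uses precisely your padding trick — to contradict $(4,3)$-expansiveness.
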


\begin{proof}
 By contradiction assume that $f$ is a $(4,3)$-expansive homeomorphism of $X$ with $|X|=\infty$ and take an expansive constant $\delta>0$.
 We know that $f$ cannot be positive expansive (see \cites{Lew,CK} for a proof).
 Therefore there are $x_1,x_2$ such that $x_1\neq x_2$ and
 \begin{equation}\label{x12}
 \dist(f^k(x_1),f^k(x_2))<\delta 
 \end{equation}
 for all $k\geq 0$. 
 Analogously, $f^{-1}$ is not positive expansive, and we can take
 $y_1,y_2$ such that $y_1\neq y_2$ and
 \begin{equation}\label{y12}
 \dist(f^k(y_1),f^k(y_2))<\delta 
 \end{equation}
 for all $k\leq 0$. 
 Consider the set $A=\{x_1,x_2,y_1,y_2\}$. 
 We have that $2\leq|A|\leq 4$ (we do not know if the 4 points are different).
 By inequalities (\ref{x12}) and (\ref{y12}) we have that $|f^k(A)|_\delta< |A|$ for all $k\in\Z$.
 If $n=|A|$ then we have that $f$ is not $(n,n-1)$-expansive. 
 In any case, $n=2,3$ or $4$, 
 by Proposition \ref{propRelBas} (see Table \ref{tableExp}) we conclude that $f$ is not $(4,3)$-expansive.
 This contradiction finishes the proof.
\end{proof}

\begin{rmk}
\label{rmkFiniteX}
 If $X$ is a compact metric space admitting a $(n+1,n)$-expansive homeomorphism with $n\geq 3$ 
 then $X$ is a finite set. It follows by Theorem \ref{teoF3} and Proposition \ref{propRelBas}.
\end{rmk}

\begin{coro}
 If $f\colon X\to X$ is a homeomorphism of a compact metric space and $|X|=\infty$ then for all 
 $\delta>0$ and $m\geq 4$ there is $A\subset X$ with $|A|=m$ such that 
 $|f^k(A)|_\delta<|A|$
 for all $k\in\Z$. 
\end{coro}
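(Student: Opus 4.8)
The plan is to read this corollary off from Remark~\ref{rmkFiniteX} (equivalently, from Theorem~\ref{teoF3} together with Proposition~\ref{propRelBas}) by contraposition, and then to absorb the remaining points by a simple padding argument. First I would fix $\delta>0$ and $m\geq 4$ and put $n=m-1\geq 3$. Since $|X|=\infty$, Remark~\ref{rmkFiniteX} forbids $f$ from being $(n+1,n)$-expansive, i.e.\ from being $(m,m-1)$-expansive; in particular the chosen $\delta$ is not an $(m,m-1)$-expansive constant. Spelling out the definition of $(m,m-1)$-expansiveness, this means there is a set $A\subset X$ with $|A|=m$ such that $|f^k(A)|_\delta\leq m-1$ for every $k\in\Z$, and since $m-1<m=|A|$ this is exactly what is claimed.

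If one prefers to invoke only the case $m=4$ already established in Theorem~\ref{teoF3}, I would instead argue as follows. By the contrapositive of Theorem~\ref{teoF3} there is $A_0\subset X$ with $|A_0|=4$ and $|f^k(A_0)|_\delta\leq 3$ for all $k\in\Z$. Using $|X|=\infty$, I would choose $C\subset X$ with $C\cap A_0=\emptyset$ and $|C|=m-4$, and set $A=A_0\cup C$, so that $|A|=m$. For each $k$, any $\delta$-separated subset of $f^k(A)$ splits into a $\delta$-separated subset of $f^k(A_0)$ and a $\delta$-separated subset of $f^k(C)$, whence $|f^k(A)|_\delta\leq |f^k(A_0)|_\delta+|f^k(C)|_\delta\leq 3+(m-4)=m-1<|A|$, again finishing the proof.

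I do not foresee any genuine obstacle here: the statement is essentially Remark~\ref{rmkFiniteX} translated into the language of $\delta$-cardinalities, and the only point that deserves a line of care is the subadditivity inequality $|f^k(A_0\cup C)|_\delta\leq |f^k(A_0)|_\delta+|f^k(C)|_\delta$ used in the second approach, which follows immediately from the definition of $|\cdot|_\delta$.
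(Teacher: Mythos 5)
Your first argument is exactly the paper's proof: the corollary is just the contrapositive restatement of Remark \ref{rmkFiniteX} in the language of $\delta$-cardinality, and your unpacking of the negation of $(m,m-1)$-expansiveness (for every $\delta>0$ some $A$ with $|A|=m$ has $|f^k(A)|_\delta\leq m-1$ for all $k\in\Z$) is correct. Your second, padding argument via Theorem \ref{teoF3} and the subadditivity $|f^k(A_0\cup C)|_\delta\leq |f^k(A_0)|_\delta+|f^k(C)|_\delta$ is also valid, but it is not needed since the first route already covers all $m\geq 4$.
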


\begin{proof}
 It is just a restatement of Remark \ref{rmkFiniteX}.
\end{proof}

\section{On Peano continua}
\label{SecPeano}

In this section we study $(m,n)$-expansiveness on Peano continua.
Let us start recalling 
that a \emph{continuum} is a compact connected metric space and a 
\emph{Peano continuum} is a locally connected continuum. 
A singleton space ($|X|=1$) is a \emph{trivial} Peano continuum.
For $x\in X$ and $\delta>0$ define the \emph{stable} and \emph{unstable} set of $x$ as
\[
 \begin{array}{l}
 W^s_\delta(x)=\{y\in X:\dist(f^k(x),f^k(y))\leq\delta\,\forall \,k\geq 0\}, \\
 W^u_\delta(x)=\{y\in X:\dist(f^k(x),f^k(y))\leq\delta\,\forall \,k\leq 0\}.
 \end{array}
\]

\begin{rmk}
Notice that $(m,n)$-expansiveness implies continuum-wise expansiveness for all $m>n\geq 1$. 
Recall that $f$ is \emph{continuum-wise expansive} if 
there is $\delta>0$ such that if $\diam(f^k(A))<\delta$ for all $k\in\Z$ and some continuum $A\subset X$, 
then $|A|=1$. 
\end{rmk}

\begin{thm}
\label{Peano1}
 If $X$ is a non-trivial Peano continuum then no homeomorphism of $X$ is $(m,n)$-expansive if $2m\geq 3n$. 
\end{thm}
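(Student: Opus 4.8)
The plan is to argue by contradiction. Suppose $f\colon X\to X$ is $(m,n)$-expansive with constant $\delta>0$ on a non-trivial Peano continuum $X$; the goal is to exhibit a set $A$ with $|A|=m$ such that $|f^k(A)|_\delta\le n$ for every $k\in\Z$, which is incompatible with $(m,n)$-expansiveness. As a first step I would record that $(m,n)$-expansiveness implies continuum-wise expansiveness, so, replacing $\delta$ by a smaller number if necessary (which is still an $(m,n)$-expansive constant), I may assume that $\delta$ is simultaneously a continuum-wise expansive constant for $f$.

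The decisive step is a localization lemma: I would first prove that there exist a point $p\in X$, a \emph{non-degenerate} subcontinuum $C^s\subset W^s_{\delta/2}(p)$ with $p\in C^s$, and a \emph{non-degenerate} subcontinuum $C^u\subset W^u_{\delta/2}(p)$ with $p\in C^u$; observe that then $\diam(f^k(C^s))\le\delta$ for all $k\ge 0$ and $\diam(f^k(C^u))\le\delta$ for all $k\le 0$. That non-degenerate $(\delta/2)$-stable continua exist, and (applying the same to $f^{-1}$) non-degenerate $(\delta/2)$-unstable continua, should follow by adapting the structure theory of continuum-wise expansive homeomorphisms on non-trivial continua \cite{Kato93}; the extra input — and this is where the Peano hypothesis is essential — is that they can be anchored at a \emph{common} point, which I would get from local connectedness (the sets of points admitting a non-degenerate $(\delta/2)$-stable, respectively $(\delta/2)$-unstable, continuum being dense in $X$) together with a Baire category argument. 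I expect this localization to be the main obstacle of the proof.

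Granting $p$, $C^s$ and $C^u$, the remainder is a counting argument. Both continua are non-degenerate, hence infinite, so I can choose pairwise distinct points $y_1,\dots,y_s\in C^s\setminus\{p\}$ and $z_1,\dots,z_u\in C^u\setminus\{p\}$ with $s+u=m-1$, choosing the split so that $\max\{s,u\}\le n-1$ — this is the step where the numerical relation between $m$ and $n$ enters. (When $m\le 2n$ the homeomorphism $f$ is expansive by Corollary~\ref{mnImplicaExp}, and taking $\delta$ below an expansive constant forces $C^s\cap C^u=\{p\}$, so distinctness is automatic; in general one simply avoids the overlap, which is possible since the continua are infinite.) Put $A=\{p,y_1,\dots,y_s,z_1,\dots,z_u\}$, so $|A|=m$. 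If $k\ge 0$, the points $f^k(p),f^k(y_1),\dots,f^k(y_s)$ all lie within $\delta/2$ of $f^k(p)$, hence in a set of diameter at most $\delta$, so any $\delta$-separated subset of $f^k(A)$ contains at most one of them; together with the at most $u$ points $f^k(z_1),\dots,f^k(z_u)$ this yields $|f^k(A)|_\delta\le 1+u\le n$. If $k\le 0$ the roles of $C^s$ and $C^u$ are interchanged and $|f^k(A)|_\delta\le 1+s\le n$. Hence $|f^k(A)|_\delta\le n$ for every $k\in\Z$, contradicting that $f$ is $(m,n)$-expansive with constant $\delta$.

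To summarize where the difficulty lies: the failure of positive expansiveness of $f$ and of $f^{-1}$ by itself only produces two, a priori unrelated, pairs of points staying $\delta$-close forward, respectively backward, in time — which already suffices for the $(4,3)$-case of Section~\ref{Sec4P} — whereas the construction above needs non-degenerate stable and unstable continua meeting at one point, and securing those seems to require combining the continuum-wise expansive structure theory with the local connectedness of the Peano continuum.
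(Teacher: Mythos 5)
The crux of your argument --- the localization lemma producing a single point $p$ that carries both a non-degenerate stable continuum and a non-degenerate unstable continuum --- is left unproved, and this is a genuine gap rather than a routine verification: you only gesture at ``adapting'' \cite{Kato93} plus a density/Baire argument, and what your counting actually needs (infinitely many points in $W^s_{\delta/2}(p)$ and in $W^u_{\delta/2}(p)$ at a \emph{common} point $p$) is strictly more than the fact the paper invokes from \cites{Kato93,Jana}, namely that for a continuum-wise expansive homeomorphism of a Peano continuum every point has non-trivial stable and unstable sets (one extra point each, at every prescribed scale). The paper's proof is organized precisely so as not to need your lemma: instead of concentrating the whole set at one point, it takes $n$ points $x_1,\dots,x_n$ with pairwise distances $>2\delta'$ and, at each $x_i$, a single $y_i\in W^s_{\delta'}(x_i)\setminus\{x_i\}$ and a single $z_i\in W^u_{\delta'}(x_i)\setminus\{x_i\}$; the set $A=\{x_i,y_i,z_i:1\le i\le n\}$ has $3n$ points and satisfies $|f^k(A)|_{\delta'}\le 2n$ for all $k\in\Z$ (for $k\ge 0$ each triple contributes at most two $\delta'$-separated points because of $y_i$, for $k\le 0$ because of $z_i$), so $f$ is not $(3n,2n)$-expansive, and Proposition~\ref{propRelBas} then transfers this to the pair $(m,n)$. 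Thus the step you yourself flag as the main obstacle is exactly the step the paper's construction bypasses, and as written your proof is incomplete at its decisive point.

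There is also a mismatch in the numerical range. Your split $s+u=m-1$ with $\max\{s,u\}\le n-1$ forces $m\le 2n-1$, so pairs such as $(3,1)$ or $(5,2)$, which satisfy $2m\ge 3n$, are not reached by your counting. To be fair, the paper's own deduction (non-$(3N,2N)$-expansiveness for every $N$, followed by Proposition~\ref{propRelBas}) reaches exactly the pairs with $2(m-n)\le n$, i.e.\ $2m\le 3n$; the inequality as printed in the statement cannot be the intended one, since Anosov diffeomorphisms of the torus are $(2,1)$-expansive on a non-trivial Peano continuum and $2\cdot 2\ge 3\cdot 1$. On the range $2m\le 3n$ (where necessarily $n\ge 2$) your constraint $m\le 2n-1$ holds automatically, so, granting your localization lemma, your counting would cover everything the paper's argument actually establishes --- but you should state that restriction explicitly rather than claim the literal $2m\ge 3n$ range, and in any case the unproved localization lemma remains the essential gap.
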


\begin{proof}
 Let $\delta$ be a positive real number and assume that $f$ is $(m,n)$-expansive.
 As we remarked above, $f$ is a continuum-wise expansive homeomorphism.
 It is known (see \cites{Kato93,Jana}) that for such homeomorphisms on a Peano continuum, 
 every point has 
 non-trivial stable and unstable sets. 
 Take $n$ different points $x_1,\dots,x_n\in X$ and let $\delta'\in(0,\delta)$ be such that 
 $\dist(x_i,x_j)>2\delta'$ if $i\neq j$. 
 For each $i=1,\dots,n$, we can take $y_i\in W^s_{\delta'}(x_i)$ and 
 $z_i\in W^u_{\delta'}(x_i)$ with $x_i\neq y_i$ and $x_i\neq z_i$. 
 Consider the set $A=\{x_1,y_1,z_1,\dots, x_n,y_n,z_n\}$. 
 Since $\dist(x_i,x_j)>2\delta'$ if $i\neq j$, and $y_i,z_i\in B_{\delta'}(x_i)$ we have that 
 $|A|=3n$. 
 If $A_i$ denotes the set $\{x_i,y_i,z_i\}$ we have that 
 $|f^k(A_i)|_{\delta'}\leq2$ for all $k\in\Z$. 
 That is because if $k\geq 0$ then $\dist(f^k(x_i),f^k(y_i))\leq\delta'$ and if 
 $k\leq 0$ then $\dist(f^k(x_i),f^k(z_i))\leq\delta'$.
 Therefore $|f^k(A)|_{\delta'}\leq 2n$, and then
$|f^k(A)|_\delta\leq 2n$. 
 Since $\delta>0$ and $n\geq 1$ are arbitrary, we have that $f$ is not $(3n,2n)$ expansive for all $n\geq 1$. 
 Finally, by Proposition \ref{propRelBas}, we have that $f$ is not $(m,n)$-expansive if $2m\geq 3n$.
\end{proof}

\begin{coro}
 If $f\colon X\to X$ is a homeomorphism and $X$ is a non-trivial Peano continuum then for all $\delta>0$ 
 there is $A\subset X$ such that $|A|=3$ and $|f^k(A)|_\delta\leq 2$
 for all $k\in\Z$.
\end{coro}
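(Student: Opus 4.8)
The plan is to obtain this as the contrapositive of Theorem \ref{Peano1} in the special case $m=3$, $n=2$. Since $2m=6=3n$, the hypothesis $2m\geq 3n$ is satisfied, so Theorem \ref{Peano1} applies and tells us that no homeomorphism of a non-trivial Peano continuum is $(3,2)$-expansive. First I would record this observation, and then simply unwind what ``not $(3,2)$-expansive'' means.

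By definition, $f$ is $(3,2)$-expansive if there is $\delta>0$ such that every $A$ with $|A|=3$ has $|f^k(A)|_\delta>2$ for some $k\in\Z$. Negating the quantifier block ``$\exists\delta\ \forall A\ \exists k$'' gives: for every $\delta>0$ there exists $A\subset X$ with $|A|=3$ such that $|f^k(A)|_\delta\leq 2$ for all $k\in\Z$. Combined with the conclusion of Theorem \ref{Peano1}, this is precisely the assertion of the corollary, so the argument would be complete at that point.

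I do not expect any genuine obstacle here; the only thing requiring care is forming the negation of the nested quantifiers correctly, and noting that the displayed set $A$ is allowed to depend on $\delta$. One might be tempted instead to re-run the construction from the proof of Theorem \ref{Peano1} with $n=1$, building $A=\{x_1,y_1,z_1\}$ from a single point $x_1$ together with a non-trivial stable point $y_1\in W^s_{\delta'}(x_1)$ and a non-trivial unstable point $z_1\in W^u_{\delta'}(x_1)$; but that route would require knowing a priori that $x_1$ has non-trivial stable and unstable sets, a fact that was available inside the proof of Theorem \ref{Peano1} only because $(m,n)$-expansiveness forces continuum-wise expansiveness, and which is \emph{not} granted for the arbitrary homeomorphism $f$ in the present statement. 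Hence the contrapositive deduction from Theorem \ref{Peano1} is the clean and correct way to argue.
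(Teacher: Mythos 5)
Your proposal is correct and is essentially the paper's own argument: apply Theorem \ref{Peano1} with $m=3$, $n=2$ (so $2m\geq 3n$) to conclude $f$ is not $(3,2)$-expansive, then unwind the negation of the definition to get, for each $\delta>0$, a set $A$ with $|A|=3$ and $|f^k(A)|_\delta\leq 2$ for all $k\in\Z$. Your remark about why one cannot simply rerun the construction from the proof of Theorem \ref{Peano1} for an arbitrary homeomorphism is a sensible observation, but no further argument is needed.
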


\begin{proof}
 By Theorem \ref{Peano1} we know that $f$ is not $(3,2)$-expansive. 
 Therefore, the proof follows by definition.
\end{proof}

\section{Hyper-expansive homeomorphisms}
\label{SecHyper}
Denote by $\K(X)$ the set of compact subsets of $X$.
This space is usually called as the \emph{hyper-space} of $X$. 
We recommend the reader to see \cite{Nadler} for more on the subject of hyper-spaces and 
the proofs of the results that we will cite below.
In the set $\K(X)$ we consider the Hausdorff distance 
$\dist_H$ making $(\K(X),\dist_H)$ a compact metric space. 
Recall that 
\begin{equation}\label{distHaus}
 \dist_H(A,B)=\inf\{\epsilon>0:A\subset B_\epsilon(B)\hbox{ and } B\subset B_\epsilon(A)\},
\end{equation}
where $B_\epsilon(C)=\cup_{x\in C} B_\epsilon(x)$ and $B_\epsilon(x)$ is the usual ball of radius $\epsilon$ 
centered at $x$.
As usual, we let $f$ to act on $\K(X)$ as $f(A)=\{f(a):a\in A\}$.
\begin{df}
\label{defHyperExp}
We say that $f$ is \emph{hyper-expansive} if $f\colon\K(X)\to \K(X)$ is expansive, i.e., there is $\delta>0$ such that 
given two compact sets $A,B\subset X$, $A\neq B$, there is $k\in\Z$ such that $\dist_H(f^k(A),f^k(B))>\delta$ 
where $\dist_H$ is the Hausdorff distance. 
\end{df}
In \cite{Artigue} it is shown that $f\colon X\to X$ is hyper-expansive if and only if 
$f$ has a finite number of orbits (i.e., there is a finite set $A\subset X$ such that $X=\cup_{k\in\Z}f^k(A)$) 
and the non-wandering set is a finite union of periodic points which are attractors or repellers. 
Recall that a point $x$ is in the \emph{non-wandering set} if for every neighborhood $U$ of $x$ there is $k>0$ 
such that $f^k(U)\cap U\neq\emptyset$. 
A point $x$ is \emph{periodic} if for some $k\geq 0$ it holds that $f^k(x)=x$. 
The orbit $\gamma=\{x,f(x),\dots,f^{k-1}(x)\}$ is a \emph{periodic orbit} if $x$ is a periodic point. 
A periodic orbit $\gamma$ is an \emph{attractor} (\emph{repeller}) if there is a compact neighborhood $U$ of $\gamma$ such that 
$f^k(U)\to\gamma$ in the Hausdorff distance as $k\to\infty$ (resp. $k\to-\infty$).

\begin{thm}\label{hyperexp}
 If $f\colon X\to X$ is a hyper-expansive homeomorphism and $|X|=\infty$ then 
 $f$ is $(m,n)$-expansive for some $m>n\geq 1$ if and only if $m\leq 3$.
\end{thm}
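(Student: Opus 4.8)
The plan is to prove the two implications separately. For the ``only if'' part it suffices to recall that a homeomorphism of an infinite compact metric space is never $(m,m-1)$-expansive with $m\ge 4$: this is Remark \ref{rmkFiniteX}, which rests on Theorem \ref{teoF3} and Proposition \ref{propRelBas}. Moreover, by Proposition \ref{propRelBas}, once $f$ is known to be $(3,2)$-expansive the exact list of pairs $(m,n)$ for which $f$ is $(m,n)$-expansive is determined. Hence the whole substance of the theorem is the ``if'' part: \emph{a hyper-expansive homeomorphism of an infinite compact metric space is $(3,2)$-expansive}, and this is where I would concentrate.

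For that I would use the structural description recalled before the statement: $f$ has finitely many orbits and $\Omega(f)$ is a finite union of periodic attractors and repellers. Passing to the power $f^T$, with $T$ the least common multiple of the periods — harmless, since every $(3,2)$-expansive constant of $f^T$ is one of $f$ and $f^T$ retains the same structure — I may assume $\Omega(f)=\{q_1,\dots,q_s\}$ consists of fixed points, and write $X\setminus\Omega(f)=\mathcal O(x_1)\cup\dots\cup\mathcal O(x_t)$, where each $\mathcal O(x_i)$ is a heteroclinic orbit with $f^n(x_i)\to\omega_i$ as $n\to+\infty$ and $f^n(x_i)\to\alpha_i$ as $n\to-\infty$, $\omega_i$ an attractor, $\alpha_i$ a repeller, $\alpha_i\ne\omega_i$. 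The one elementary fact behind everything is that each orbit point is isolated in $\clos\mathcal O(x_i)=\mathcal O(x_i)\cup\{\alpha_i,\omega_i\}$. Consequences: $n\mapsto\dist(f^n(x_i),\Omega(f))$ tends to $0$ at $\pm\infty$, hence attains a positive maximum at some ``central'' index $m_i$; for any fixed threshold only finitely many orbit points lie that far from $\Omega(f)$; and $\dist(p,\clos\mathcal O(x_{i'}))>0$ whenever $p\notin\clos\mathcal O(x_{i'})$.

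I would then fix $\delta>0$ below each of finitely many strictly positive numbers: $\tfrac13\dist(q_i,q_j)$ for $i\ne j$; $\tfrac13\dist(c_i,\Omega(f))$ with $c_i:=f^{m_i}(x_i)$; $\dist(c_i,\clos\mathcal O(x_{i'}))$ for $i\ne i'$; $\inf\{\dist(c_i,f^n(x_i)):f^n(x_i)\ne c_i\}$; $\inf_{g\ge 1}\sup_n\dist(f^n(x_i),f^{n+g}(x_i))$; and finitely many analogous minima over orbit points inside a bounded index-window around a central point. Each of these is positive precisely because of the isolation fact. Given $A$ with $|A|=3$, I would argue by cases on how the three points distribute over $\Omega(f)$ and the orbits. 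If two or three of them are fixed, those are already $3\delta$-separated and one slides the remaining point (if any) to a central index. If the ``forward limits'' of the three points (the attractor $\omega_i$ for a point in $\mathcal O(x_i)$, and the point itself if it is fixed) are pairwise distinct, then $f^k(A)$ converges to a three-point subset of $\Omega(f)$ as $k\to+\infty$, so $f^k(A)$ is eventually $\delta$-separated; symmetrically for repellers as $k\to-\infty$.

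In every remaining configuration — several points on one orbit, or on orbits sharing a limit point — neither end of time yields three distinct limit points, so one must separate ``in the middle'', and this is the main obstacle. The difficulty is twofold: the central regions of distinct orbits are not synchronized under iteration, so one cannot demand that several points be central simultaneously; and even within a single orbit the two non-central points might a priori be mutually close. The resolution is to slide $A$ so that one appropriately chosen point of $A$ (which point depends on the configuration) becomes central; by the isolation fact that point is then $>3\delta$ from $\Omega(f)$, hence from any fixed coordinate, and is bounded away from the closures of the other orbits, while a short finite subcase analysis — according to whether the remaining offsets are large (so the corresponding points lie near $\alpha_i$ or $\omega_i$) or small (so they lie in a fixed window around the center) — shows the two remaining points are $\delta$-separated from the central one and from each other. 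It is the finiteness of the orbit points lying far from $\Omega(f)$, together with the convergence to the $\alpha_i$ and $\omega_i$, that collapses the infinitely many possible sets $A$ to finitely many cases and so makes a single $\delta$ work for all of them.
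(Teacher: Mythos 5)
Your reduction of the ``only if'' half does not work, and this is the main gap. Remark \ref{rmkFiniteX} only excludes the pairs $(n+1,n)$ with $n\geq 3$, and Proposition \ref{propRelBas} propagates $(3,2)$-expansiveness only to pairs with $n'\leq 2$; neither of these settles, say, $(5,3)$- or $(6,4)$-expansiveness, so the claim that ``once $f$ is $(3,2)$-expansive the exact list of pairs is determined'' is unjustified. (Indeed the negative half must be read as ``$f$ is not $(m,n)$-expansive for any $n\geq 3$'': by Proposition \ref{propRelBas}, $(3,2)$-expansiveness already gives $(4,2)$- and $(4,1)$-expansiveness, so no argument can exclude all pairs with $m\geq 4$.) The paper proves the converse by a further construction that uses the hyper-expansive structure again: take a wandering point $x$ heteroclinic from a repelling fixed point $p_r$ to an attracting one $p_a$, let $l=k_2-k_1$ be the length of the time window outside of which the orbit is $\delta$-close to $p_r$ (in the past) or to $p_a$ (in the future), and let $A$ consist of $m$ points of the orbit spaced $l$ apart. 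At every time $k$ at most one point of $f^k(A)$ lies in the transition window, so $|f^k(A)|_\delta\leq 3$ for all $k\in\Z$, whence $f$ is not $(m,3)$-expansive for any $m>3$ (and, via Proposition \ref{propRelBas}, not $(m,n)$-expansive for any $n\geq 3$). Nothing in your proposal supplies this step.

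For the direct part your plan is the same in spirit as the paper's (finitely many orbits, isolation of wandering points, a $\delta$ below finitely many positive constants, then a case analysis with a well-chosen synchronization time), but it is vague exactly where the difficulty lies. Your constants control the central points $c_i$ against $\Omega(f)$, against the other orbits and against their own orbit, but you never introduce the analogues of conditions (3)--(5) in the paper's proof: forward iterates of the chosen representatives stay $\delta$-far from every repeller, backward iterates stay $\delta$-far from every attractor, and forward iterates stay $\delta$-far from backward iterates. Without these, ``slide one point of $A$ to its center; the remaining points are near $\alpha_i$, near $\omega_i$, or in a finite window'' fails precisely when the nearby limit point belongs to $A$: for $A=\{r,y_1,y_2\}$ with $r$ the common repeller and the $\omega$-limits of $y_1,y_2$ equal (so neither time-limit separates), centralizing either wandering point can leave the other deep in its backward tail, hence $\delta$-close to $r\in A$. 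The fix is the paper's choice of synchronization time (the latest hitting time when a repeller is involved, the earliest for an attractor, the middle one when all three points are wandering), which forces the non-central points to be forward (resp.\ backward) iterates of their representatives and then uses exactly those missing constants. Your ``short finite subcase analysis'' asserts rather than performs this step, so the direct part as written is also incomplete, although your framework could be completed along these lines.
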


\begin{proof}
Let us start with the direct part of the theorem. 
Let $P_a$ be the set of periodic attractors, $P_r$ the set of periodic repellers and 
take $x_1,\dots, x_j$ one point in each wandering orbit (recall that, as we said above, 
hyper-expansiveness implies that $f$ has just a finite number of orbits).
Define $Q=\{x_1,\dots,x_j\}$.
Take $\delta>0$ such that 
\begin{enumerate}
 \item if $p,q\in P_a\cup P_r$ and $p\neq q$ then $\dist(p,q)>\delta$,
 \item if $x_i\in Q$ then $B_\delta(x_i)=\{x_i\}$ (recall that wandering points are isolated by \cite{Artigue}),
 \item if $p\in P_a$, $x_i\in Q$ and $k\leq 0$ then $\dist(p,f^k(x_i))>\delta$, 
 \item if $q\in P_r$, $x_i\in Q$ and $k\geq 0$ then $\dist(p,f^k(x_i))>\delta$ and 
 \item if $x,y\in Q$ and $k>0>l$ then $\dist(f^k(x),f^l(y))>\delta$.
\end{enumerate}
Let us prove that such $\delta$ is a $(3,2)$-expansive constant.
Take $a,b,c\in X$ with $|\{a,b,c\}|=3$. The proof is divided by cases: 
\begin{itemize}
 \item If $a,b,c\in P=P_a\cup P_r$ then item 1 above concludes the proof. 
 \item If $a,b\in P$ and $c\notin P$ then there is $k\in\Z$ such that $f^k(c)\in Q$. 
 In this case items 1 and 2 conclude the proof. 
 \item Assume now that $a\in P$ and $b,c\notin P$. Without loss of generality 
 let us suppose that $a$ is a repeller. Let $k_b,k_c\in\Z$ be such that 
 $f^{k_b}(b),f^{k_c}(c)\in Q$. 
 Define $k=\min\{k_b,k_c\}$. 
 In this way: $\dist(f^k(a),f^k(b)),\dist(f^k(a),f^k(c))\geq\delta$ by item 4 and 
 $\dist(f^k(b),f^k(c))\geq\delta$ by item 2. 
 \item If $a,b,c\notin P$ then
 consider $k_a,k_b,k_c\in\Z$ such that $f^{k_a}(a),f^{k_b}(b),f^{k_c}(c)\in Q$. 
 Assume, without loss, that $k_a\leq k_b\leq k_c$. 
 Take $k=k_b$. 
 In this way, items 2 and 5 finishes the direct part of the proof.
\end{itemize}

To prove the converse, 
we will show that $f$ is not $(m,3)$-expansive for all $m>3$.
Take $\delta>0$.
Notice that since $X=\infty$ there is at least one wandering point $x$. 
Without loss of generality assume that $\lim_{k\to\infty} f^k(x)=p_a$ an attractor fixed point 
and $\lim_{k\to-\infty} f^k(x)=p_r$ a repeller fixed point. 
Take $k_1,k_2\in\Z$ such that $\dist(f^k(x),p_r)<\delta$ for all $k\leq k_1$ and 
$\dist(f^k(x),p_a)<\delta$ for all $k\geq k_2$. 
Let $l=k_2-k_1$ and define $x_1=f^{-k_1}(x)$, and $x_{i+1}=f^l(x_i)$ for all $i\geq 1$. 
Consider the set $A=\{x_1,\dots,x_m\}$. 
By construction we have that $|A|=m$ and $|f^k(A)|_\delta\leq 3$ for all $k\in\Z$. 
Thus, proving that $f$ is not $(m,3)$-expansive if $m>3$.
\end{proof}

\begin{rmk}
In light of the previous proof one may wonder if a \emph{smart} $\delta$-observer will not be able to 
say that $A$ has more than 3 points. 
We mean, we are assuming that a $\delta$-observer will say that 
$A$ has $n'$ points with 
\[
 n'=\max_{k\in\Z} |f^k(A)|_\delta.
\]
According to the dynamic of the set $A$ in the previous proof, we guess that with more reasoning a smarter $\delta$-observer 
will find that $A$ has more than 3 points.
\end{rmk}

Theorem \ref{hyperexp} gives us examples of $(3,2)$-expansive homeomorphisms on infinite countable 
compact metric spaces. 
A natural question is: does $(3,2)$-expansiveness implies hyper-expansiveness?
I do not know the answer, but let us remark some facts that may be of interest. 
If $f$ is $(3,2)$-expansive then:
\begin{itemize}
 \item For all $x\in X$ either the stable or the unstable set must be trivial. 
 It follows by the arguments of the proof of Theorem \ref{Peano1}. 
 \item If $x,y$ are bi-asymptotic, i.e., $\dist(f^k(x),f^k(y))\to 0$ as $k\to\pm\infty$ then 
 they are isolated points of the space. Suppose that $x$ were an accumulation point. 
 Given $\delta>0$ take $k_0$ such that if $|k|>k_0$ then $\dist(f^k(x),f^k(y))<\delta$. 
 Take a point $z$ close to $x$ such that $\dist(f^k(x),f^k(z))<\delta$ if $|k|\leq k_0$ (we are just using the continuity of $f$). 
 Then $x,y,z$ contradicts $(3,2)$-expansiveness.
\end{itemize}

\begin{prop}
 There are $(4,2)$-expansive homeomorphisms that are not $(3,2)$-expansive.
\end{prop}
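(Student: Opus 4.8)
The plan is to build an explicit countable compact metric space $X$ together with a homeomorphism $f$ of it that is $(4,2)$-expansive but not $(3,2)$-expansive. Guided by the remark above—a $(3,2)$-expansive homeomorphism cannot have a point with simultaneously non-trivial stable and unstable sets—the idea is to make $f$ have a \emph{saddle} fixed point while keeping the phase space so thin that no four-point set can hide. Concretely, in $\R^2$ take three fixed points $q=(0,0)$, $p=(10,0)$, $q'=(20,0)$ and two orbits of isolated points: $\{y_n\}_{n\in\Z}$ with $y_n\to q$ as $n\to-\infty$ and $y_n\to p$ as $n\to+\infty$, lying between $q$ and $p$; and $\{z_n\}_{n\in\Z}$ with $z_n\to p$ as $n\to-\infty$ and $z_n\to q'$ as $n\to+\infty$, lying between $p$ and $q'$. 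Let $f$ fix $q,p,q'$ and shift indices, $f(y_n)=y_{n+1}$, $f(z_n)=z_{n+1}$. I would place the points so that, for the constant $\delta=1$: for $|n|\ge 2$ the point $y_n$ lies within distance $1/3$ of $q$ (if $n\le-2$) or of $p$ (if $n\ge2$), and symmetrically for $z_n$; whereas $y_{-1},y_0,y_1$ and $z_{-1},z_0,z_1$ lie in the \emph{transit zone} of points at distance $\ge 3$ from each of $q,p,q'$, and within each orbit are pairwise at distance $\ge 2$. Then compactness of $X$, continuity of $f$ at $q,p,q'$, and isolation of all other points are immediate, and one records the elementary distance facts: (i) two distinct points of $X$ are at distance $>1$ unless both lie within $1/3$ of the same fixed point; (ii) two distinct transit points (of the same orbit or of the two orbits) are at distance $>1$.

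To see that $f$ is not $(3,2)$-expansive, fix $\epsilon>0$ and, using $y_n\to p$ as $n\to+\infty$ and $z_n\to p$ as $n\to-\infty$, choose integers $M_y,M_z$ with $\dist(y_m,p)\le\epsilon$ for $m\ge M_y$ and $\dist(z_m,p)\le\epsilon$ for $m\le M_z$, and an integer $l<M_z-M_y$. For every $k\in\Z$ at least one of $f^k(y_0)=y_k$ and $f^k(z_l)=z_{l+k}$ lies within $\epsilon$ of $p$: otherwise $k\le M_y-1$ and $l+k\ge M_z+1$, forcing $l\ge M_z-M_y+2$, a contradiction. Hence the three-point set $A=\{p,y_0,z_l\}$ has $|f^k(A)|_\epsilon\le 2$ for all $k$, and since $\epsilon>0$ was arbitrary, $f$ is not $(3,2)$-expansive.

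The main work is to prove that $f$ is $(4,2)$-expansive with constant $\delta=1$. From (i)–(ii), a finite set $B\subset X$ satisfies $|B|_1\ge 3$ exactly when it contains three points no two of which lie within $1/3$ of a common fixed point—so it suffices, for a given four-point set $A$, to find an iterate $f^k(A)$ meeting three of the four ``regions'' (near $q$, near $p$, near $q'$, transit), or containing three transit points. I would then split into cases according to the number of fixed points in $A$ and how its orbit points distribute between $\{y_n\}$ and $\{z_n\}$. If $A\supseteq\{q,p,q'\}$ there is nothing to prove. If $A$ contains exactly two fixed points, then at any time at which some orbit point of $A$ sits in its transit zone that point is $1$-separated from both fixed points, which gives the required triple. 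Otherwise $A$ contains two points of a common orbit, say $y_a,y_b$ with $a<b$; choosing $k$ so that $y_b$ moves to the leftmost transit index of the $y$-orbit sends every other $y$-point of $A$ into the region near $q$, and since at this time none of the remaining (at most two) points of $A$ is near $q$ unless it equals $q$, a short inspection produces a $1$-separated triple—the sole exceptions being $A=\{y_{a_1},y_{a_2},y_{a_3},q\}$, $A=\{z_{c_1},z_{c_2},z_{c_3},q'\}$, $A\subset\{y_n\}$, and $A\subset\{z_n\}$, each of which is handled instead at a time that drives two of the four points into a common endpoint region and one into transit.

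The delicate point will be this last case checking: for each of these finitely many configurations one must verify that the explicitly chosen iterate really does $1$-separate three of the four points, which comes down to the few distance estimates among the four regions and is precisely what pins down the numerical constants ($10$, $1/3$, $3$, $2$) in the construction. Finally, $(4,2)$-expansiveness already implies expansiveness by Corollary~\ref{mnImplicaExp}, so nothing further needs to be verified.
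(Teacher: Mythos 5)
Your construction is essentially the paper's own example---three fixed points joined by two wandering orbits chained through the middle fixed point---and both halves of your argument (the set $\{p,y_0,z_l\}$ witnessing the failure of $(3,2)$-expansiveness, and the case analysis giving $(4,2)$-expansiveness) follow the same lines, your case analysis simply making explicit what the paper leaves as ``a similar argument as in the proof of Theorem \ref{hyperexp}''. The proposal is correct; the only small point to fix is that in the case analysis $y_b$ should be taken to be the $y$-point of \emph{largest} index in $A$, so that the chosen iterate really sends every other $y$-point of $A$ into the region near $q$.
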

\begin{proof}
 Let us prove it giving an example. 
 Consider a countable compact metric space $X$ and a homeomorphism $f\colon X\to X$ 
 with the following properties:
 \begin{enumerate}
  \item $f$ has 5 orbits, 
  \item $a,b,c\in X$ are fixed points of $f$,
  \item there is $x\in X$ such that $\lim_{k\to-\infty}f^k(x)=a$ and 
  $\lim_{k\to+\infty}f^k(x)=b$,
  \item there is $y\in X$ such that $\lim_{k\to-\infty}f^k(y)=b$ and 
  $\lim_{k\to+\infty}f^k(y)=c$.
 \end{enumerate}
In order to see that $f$ is not $(3,2)$-expansive consider $\epsilon>0$. 
Take $k_0\in\Z$ such that for all $k\geq k_0$ it holds that 
$\dist(f^k(x),b)<\epsilon$ and $\dist(f^{-k}(y),b)<\epsilon$. 
Define $u=f^{k_0}(x)$ and $v=f^{-k_0}(y)$. 
In this way $\|\{f^k(u),b,f^k(v)\}\|_\epsilon\leq 2$ for all $k\in\Z$.
This proves that $f$ is not $(3,2)$-expansive. 

 
Let us now indicate how to prove that $f$ is $(4,2)$-expansive. 
Consider $\epsilon>0$ such that if $i\geq 0$ and $j\in\Z$ then 
$\dist(f^{-i}(x),f^j(y))>\epsilon$ and 
$\dist(f^j(x),f^i(y))>\epsilon$. 
Now, a similar argument as in the proof of Theorem \ref{hyperexp}, shows that $f$ is $(4,2)$-expansive.
\end{proof}

\section{On the general case}
\label{SecGenCase}
In this section we prove that an important class of homeomorphisms are not $(m,n)$-expansive 
for all $m>n\geq 2$.
In order to state this result let us recall that a $\delta$-\emph{pseudo orbit} 
is a sequence $\{x_k\}_{k\in\Z}$ such that 
$\dist(f(x_k),x_{k+1})\leq\delta$ for all $k\in\Z$. 
We say that a homeomorphism has the \emph{shadowing property} 
if for all $\epsilon>0$ there is $\delta>0$ such that 
if $\{x_k\}_{k\in\Z}$ is a $\delta$-pseudo orbit 
then there is $x$ such that 
$\dist(f^k(x),x_k)<\epsilon$ for all $k\in\Z$.
In this case we say that $x$ $\epsilon$-\emph{shadows} the $\delta$-pseudo orbit.

\begin{thm}
\label{teoShadowing}
 Let $f\colon X\to X$ be a homeomorphism of a compact metric space $X$. 
 If $f$ has the shadowing property and
 there are $x,y\in X$ such that 
 $$0=\liminf_{k\to\infty}\dist(f^k(x),f^k(y))<\limsup_{k\to\infty}\dist(f^k(x),f^k(y))$$
 then $f$ is not $(m,n)$-expansive if $m>n\geq 2$.
\end{thm}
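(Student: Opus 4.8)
The plan is to use the hypothesis to manufacture, for any prescribed $\epsilon>0$, a single set $A$ with $|A|=m$ whose every iterate has $\delta$-cardinality at most $2$, which by Proposition \ref{propRelBas} (see Table \ref{tableExp}) contradicts $(m,n)$-expansiveness for every $m>n\geq 2$, since $(m,n)$-expansive would imply $(m,2)$-expansive and hence $(3,2)$-expansive. So it suffices to show $f$ is not $(3,2)$-expansive; equivalently, given $\epsilon>0$ I must produce three points $a,b,c$, pairwise distinct, with $|f^k(\{a,b,c\})|_\epsilon\leq 2$ for all $k\in\Z$.

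\smallskip
First I would fix $\epsilon>0$ and let $\delta>0$ be the shadowing constant for $\epsilon/4$ (say). The idea is to build a $\delta$-pseudo orbit that ``looks like'' the forward orbit of $y$ but periodically jumps back to agree with the forward orbit of $x$: using $0=\liminf_{k\to\infty}\dist(f^k(x),f^k(y))$ I can pick an increasing sequence of times $k_1<k_2<\cdots$ with $\dist(f^{k_i}(x),f^{k_i}(y))<\delta$, and at each such time splice the itinerary so that it follows $f$-orbit segments of $x$ and of $y$ alternately. More carefully, I would construct a pseudo orbit $\{z_k\}$ which on long blocks coincides with an actual $f$-orbit piece of $x$ and on other long blocks coincides with an actual $f$-orbit piece of $y$, the transitions happening only at the chosen near-coincidence times so the jump is $<\delta$; in the backward direction just set $z_k=f^k(x)$ for $k\le 0$. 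Let $c$ be a point $\epsilon/4$-shadowing this pseudo orbit. The point is that $f^k(c)$ is within $\epsilon/4$ of $f^k(x)$ on the ``$x$-blocks'' and within $\epsilon/4$ of $f^k(y)$ on the ``$y$-blocks.''

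\smallskip
Now take $a=x$, $b=y$, $c$ as above. For each $k\in\Z$ I claim $\{f^k(a),f^k(b),f^k(c)\}$ is not $\epsilon$-separated: if $k$ lies in (or near) an $x$-block then $\dist(f^k(c),f^k(x))<\epsilon/2$ after also using uniform continuity to widen the block slightly, so two of the three points are within $\epsilon$; if $k$ lies in a $y$-block then similarly $f^k(c)$ is $\epsilon$-close to $f^k(y)$. Thus $|f^k(\{a,b,c\})|_\epsilon\le 2$ for every $k$. Since $\epsilon>0$ was arbitrary, $f$ is not $(3,2)$-expansive, hence not $(m,2)$-expansive for any $m\ge 3$, hence not $(m,n)$-expansive for any $m>n\ge 2$ by Proposition \ref{propRelBas}. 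One should also check $a,b,c$ are pairwise distinct; $a\neq b$ is given (the $\limsup$ being positive forces $x\neq y$), and $c$ can be taken distinct from both — if, say, $c=x$, then $x$ itself $\epsilon/4$-shadows the pseudo orbit, which on the $y$-blocks forces $\dist(f^k(x),f^k(y))<\epsilon$ there, and choosing $\epsilon$ smaller than $\tfrac12\limsup$ contradicts the itinerary being built to track $f^k(y)$ up to distance growing past that $\limsup$ on those blocks; if the blocks genuinely cannot be made to force a visible separation we are already done a fortiori, so this is harmless.

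\smallskip
The main obstacle I expect is the bookkeeping in defining the pseudo orbit so that the transitions occur only at the near-coincidence times \emph{and} so that on the $y$-blocks the shadowing point is genuinely close to $f^k(y)$ (not merely close to some hybrid) — this requires using $\liminf=0$ to make transitions cheap, but also using $\limsup>0$ only at the very end, to know the construction is non-vacuous and that $c$ is new. A cleaner route, which I would try first, is to avoid transitions entirely: take a strictly increasing sequence $k_i\to\infty$ with $\dist(f^{k_i}(x),f^{k_i}(y))\to 0$, and form the pseudo orbit that agrees with $f^{\cdot}(y)$ for $k\le k_1$, with $f^{\cdot-k_1}(f^{k_1}(x))$ — i.e.\ the $x$-orbit re-synchronized — for $k_1<k\le k_2$, and so on, alternating; then a shadowing point $c$ tracks $x$ on odd blocks and $y$ on even blocks, giving the desired $|f^k(\{x,y,c\})|_\epsilon\le 2$ on every block interior, and a short uniform-continuity argument covers the finitely-wide boundary zones where one might a priori see three separated points. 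The delicate point in either version is purely the estimate at block boundaries, which is routine once the block lengths are chosen large relative to the modulus of continuity of $f$ on the relevant finite time window.
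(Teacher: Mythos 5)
There is a genuine gap, and it is in the very first reduction. You claim that it suffices to show $f$ is not $(3,2)$-expansive, ``since $(m,n)$-expansive would imply $(m,2)$-expansive and hence $(3,2)$-expansive.'' The first implication is correct (Proposition \ref{propRelBas} with $n'=2\leq n$, $m'=m$), but the second is backwards: by Proposition \ref{propRelBas} (see Table \ref{tableExp}) the implications in the second row run $(3,2)\Rightarrow(4,2)\Rightarrow(5,2)\Rightarrow\dots$, so ``not $(3,2)$-expansive'' gives no information about $(m,2)$-expansiveness for $m>3$. Indeed the paper itself exhibits (Section \ref{SecHyper}) a $(4,2)$-expansive homeomorphism that is not $(3,2)$-expansive, so the converse implication you invoke is false in general. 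Your construction produces a single shadowing point $c$ and hence only the three-point set $\{x,y,c\}$ with $|f^k(\{x,y,c\})|_\epsilon\leq 2$; this disproves $(3,2)$-expansiveness but not $(m,2)$-expansiveness for $m>3$, which is what Theorem \ref{teoShadowing} actually requires.

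The fix is to exploit the hypotheses once more, and it is exactly where $\limsup>0$ does real work (in your write-up it is only used to check $c\neq x,y$). Splice at infinitely many different near-coincidence times: choose $a_1<b_1<a_2<b_2<\dots$ with $\dist(f^{a_l}(x),f^{a_l}(y))<\delta$ and $\dist(f^{b_l}(x),f^{b_l}(y))>\rho$ for some fixed $\rho\in(0,\epsilon)$, and for each $l$ let $w^l$ $(\rho/2)$-shadow the pseudo orbit that follows the orbit of $x$ for $k<a_l$ and the orbit of $y$ for $k\geq a_l$ (a single jump of size $<\delta$, no alternation needed, so your block-boundary bookkeeping disappears). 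At time $b_l$ the pseudo orbits for $l$ and for any $s>l$ sit near $f^{b_l}(y)$ and $f^{b_l}(x)$ respectively, which are more than $\rho$ apart, so the points $w^l$ are pairwise distinct; the set $A=\{w^l: l\geq 1\}$ is infinite and satisfies $f^k(A)\subset B_\epsilon(f^k(x))\cup B_\epsilon(f^k(y))$, hence $|f^k(A)|_\epsilon\leq 2$ for all $k\in\Z$. Taking $m$ points of $A$ kills $(m,2)$-expansiveness for every $m>2$ simultaneously, and then Proposition \ref{propRelBas} gives the theorem. This is precisely the paper's argument; your shadowing/splicing mechanics are sound, but as written the proposal proves a strictly weaker statement.
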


\begin{proof}
 By Proposition \ref{propRelBas} it is enough to prove that $f$ cannot be $(m,2)$-expansive if $m>2$.
 Consider $\epsilon>0$. 
 We will define a set $A$ with $|A|=\infty$ such that for all $k\in\Z$, 
 $f^k(A)\subset B_\epsilon(f^k(x))\cup B_\epsilon(f^k(y))$, proving that $f$ is not $(m,2)$-expansive for all $m>2$. 
  
 Consider two increasing sequences $a_l,b_l \in\Z$, $\rho\in(0,\epsilon)$ and $\delta>0$ such that 
 \[
  \begin{array}{l}
 a_1<b_1<a_2<b_2<a_3<b_3<\dots,\\ 
 \dist(f^{a_l}(x),f^{a_l}(y))<\delta,\\
 \dist(f^{b_l}(x),f^{b_l}(y))>\rho
  \end{array}
 \]
 for all $l\geq 1$ and assume that every $\delta$-pseudo orbit can be $(\rho/2)$-shadowed.  
 For each $l\geq 1$ define the $\delta$-pseudo orbit $z^l_k$ as 
 \[
 z^l_k=\left\{
  \begin{array}{l}
    f^k(x)\hbox{ if }k<a_l,\\
    f^k(y)\hbox{ if }k\geq a_l. 
  \end{array}
  \right.
 \]
 Then, for every $l\geq 1$ there is a point $w^l$ whose orbit $(\rho/2)$-shadows the $\delta$-pseudo orbit $\{z^l_k\}_{k\in\Z}$. 
 Let us now prove that if $1\leq l<s$ then $w^l\neq w^{s}$. We have that $a_l<b_l<a_{s}$.
 Therefore $z^l_{b_l}=f^{b_l}(y)$ and $z^{s}_{b_l}=f^{b_l}(x)$. 
 Since $w^l$ and $w^{s}$ $(\rho/2)$-shadows the pseudo orbits $z^l$ and $z^{s}$ respectively, we have that 
 $$\dist(f^{b_l}(w^l),f^{b_l}(y)),\dist(f^{b_l}(w^{s}),f^{b_l}(x))<\rho/2.$$ 
 We conclude that $w^l\neq w^{s}$ because $\dist(f^{b_l}(x),f^{b_l}(y))>\rho$.
 Therefore, if we define $A=\{w^l:l\geq 1\}$ we have that $|A|=\infty$.
 Finally, since $\rho<\epsilon$, we have that 
 $f^k(A)\subset B_\epsilon(f^k(x))\cup B_\epsilon(f^k(y))$ for all $k\in\Z$. 
 Therefore, $|f^k(A)|_\epsilon\leq 2$ for all $k\in\Z$.
 \end{proof}

\begin{rmk}
Examples where Theorem \ref{teoShadowing} can be applied are Anosov diffeomorphisms and symbolic shift maps. 
Also, if $f\colon X\to X$ is a homeomorphism with an invariant set $K\subset X$ such that 
  $f\colon K\to K$ is conjugated to a symbolic shift map then Theorem 
  \ref{teoShadowing} holds because the $(m,n)$-expansiveness of $f$ in $X$ implies the $(m,n)$-expansiveness 
of $f$ restricted to any compact invariant set $K\subset X$ as can be easily checked.   
\end{rmk}
 
\begin{bibdiv}
\begin{biblist}

\bib{Artigue}{article}{
author={A. Artigue},
title={Hyper-expansive homeomorphisms},
journal={Publ. Mat. Urug.},
volume={14},
pages={62--66},
year={2013}}

\bib{APV}{article}{
author={A. Artigue},
author={M. J. Pacifico},
author={J. L. Vieitez},
title={$N$-expansive homeomorphisms on surfaces},
year={2013},
journal={Preprint}}

\bib{Bowen72}{article}{
author={R. Bowen},
title={Entropy-expansive maps},
journal={Trans. of the AMS},
volume={164},
year={1972},
pages={323--331}}

\bib{CK}{article}{
author={E. M. Coven},
author={M. Keane},
title={Every compact metric space that supports a positively expansive homeomorphism is finite},
year={2006},
volume={48},
pages={304--305},
journal={IMS Lecture Notes Monogr. Ser., Dynamics \& Stochastics}}

\bib{Kato93}{article}{
author={H. Kato},
title={Continuum-wise expansive homeomorphisms},
journal={Canad. J. Math.},
volume={45},
number={3},
year={1993},
pages={576--598}}

\bib{Lew}{book}{
author={J. Lewowicz},
year={2003},
title={Dinámica de los homeomorfismos expansivos},
publisher={Monograf\'\i as del IMCA}}

\bib{Morales2011}{article}{
author={C. A. Morales},
title={Measure expansive systems},
journal={Preprint IMPA},
year={2011}}

\bib{MoSi}{book}{
author={C. A. Morales},
author={V. F. Sirvent},
title={Expansive measures},
publisher={IMPA},
year={2013},
series={29o Col\'oq. Bras. Mat.}}

\bib{Morales}{article}{
author={C. A. Morales},
title={A generalization of expansivity},
journal={Discrete Contin. Dyn. Syst.},
volume={32},
year={2012}, 
number={1},
pages={293--301}}

\bib{Nadler}{book}{
author={S. Nadler Jr.},
title={Hyperspaces of Sets},
publisher={Marcel Dekker Inc. New York and Basel},
year={1978}}

\bib{PV2008}{article}{
author={M. J. Pacifico},
author={J. L. Vieitez},
title={Entropy expansiveness and domination for surface diffeomorphisms},
journal={Rev. Mat. Complut.},
volume={21},
number={2},
year={2008},
pages={293--317}}

\bib{Reddy70}{article}{
author={W. L. Reddy},
title={Pointwise expansion homeomorphisms},
journal={J. Lond. Math. Soc.},
year={1970},
volume={2},
pages={232--236}} 

\bib{Jana}{article}{
author={J. Rodriguez Hertz},
title={There are no stable points for continuum-wise expansive homeomorphisms},
journal={Pre. Mat. Urug. 65},
year={2002}}

\bib{Sakai97}{article}{
author={K. Sakai},
title={Continuum-wise expansive diffeomorphisms},
journal={Publ. Mat.},
volume={41},
number={2},
year={1997},
pages={375--382}}

\bib{Schw}{article}{
author={S. Schwartzman},
title={On transformation groups},
journal={Dissertation, Yale University},
year={1952}}

\bib{Utz}{article}{
author={W. R. Utz},
title={Unstable homeomorphisms},
journal={Proc. Amer. Math. Soc.},
year={1950},
volume={1},
number={6},
pages={769--774}}

\end{biblist}
\end{bibdiv}

\end{document}